\newtheorem{thm}{Theorem}[section]
\newtheorem{corr}[thm]{Corollary}
\newtheorem{lem}[thm]{Lemma}
\newtheorem{prop}[thm]{Proposition}
\theoremstyle{definition}
\newtheorem{defn}{Definition}[section]
\theoremstyle{remark}
\newtheorem{rem}{Remark}[section]
\numberwithin{equation}{section}
\def\R{\mathbb R}
\def\SS{\mathbb S}
\def\cal{\mathcal}
\def\f{\frac}
\def\td{\tilde}
\def\ra{\rightarrow}
\def\pt{\partial}
\begin{document}
\title[$\mathcal{F}$-stability for self-shrinkers]{$\mathcal{F}$-stability for self-shrinking solutions to mean curvature flow}
\author{Ben Andrews}
\address{Mathematical Sciences Institute, Australia National University; Mathematical Sciences
Center, Tsinghua University; and Morningside Center for Mathematics, Chinese Academy of Sciences.}
\email{Ben.Andrews@anu.edu.au}
\author{Haizhong Li}
\address{Department of mathematical sciences, Tsinghua University, 100084, Beijing, P. R. China}
\email{hli@math.tsinghua.edu.cn}
\author{Yong Wei}
\address{Department of mathematical sciences, Tsinghua University, 100084, Beijing, P. R. China}
\email{wei-y09@mails.tsinghua.edu.cn}
\thanks{The research of the first author was partially supported by Discovery Projects grant DP120100097 of the Australian Research Council.
The Research of the second author was supported by NSFC No. 10971110.}
\maketitle

\begin{abstract}
In this paper, we formulate the notion of the $\mathcal{F}$-stability of self-shrinking solutions to mean curvature flow in arbitrary codimension. Then we give some classifications of the $\mathcal{F}$-stable self-shrinkers in arbitrary codimension, in codimension one case, our results reduce to Colding-Minicozzi's results.
\end{abstract}

\section{Introduction}

Self-shrinking solutions often arise as the tangent flow at singularities of the mean curvature flow, which is one of the most important part in the study of mean curvature flow. We call any time-slice of the self-shrinking solutions a self-shrinker, which is an $n$-dimensional submanifold in the Euclidean space $\R^{n+p}$ satisfying
\begin{equation}
    H=-\f{(x-x_0)^{\perp}}{2t_0},\qquad x_0\in\R^{n+p}, 0<t_0\in\R.
\end{equation}

There are many interesting works about the classification of self-shrinkers in recent years. Most recently, in the paper \cite{CM2}, Colding and Minicozzi studied the entropy stability of self-shrinkers in the mean curvature flow (in the case $p=1$). Given $x_0\in\R^{n+p}$ and $t_0>0$, the functional $\mathcal{F}_{x_0,t_0}$ is defined by
\begin{eqnarray}
    \mathcal{F}_{x_0,t_0}(M)&=&(4\pi t_0)^{-\f n2}\int_Me^{-\f{|x-x_0|^2}{4t_0}}d\mu,
\end{eqnarray}
which can be traced back to Huisken's Monotonicity formula \cite{H2}. The critical points of $\mathcal{F}_{x_0,t_0}$ are the surfaces shrinking to $x_0$ at time $t_0$. The entropy functional $\lambda=\lambda(M)$ is the supremum of the $\mathcal{F}$ functional over $x_0$ and $t_0$:
\begin{eqnarray}
\lambda(M) &=& \sup_{x_0\in\R^{n+p},t_0>0}\mathcal{F}_{x_0,t_0}(M).
\end{eqnarray}
The entropy is non-increasing in $t$ along the mean curvature flow, and its critical points are precisely given by self-shrinkers.

A self-shrinker is called entropy-stable if it is a local minimum for the entropy functional. One is more interested in the stable self-shrinkers since the unstable ones could be perturbed away thus may not represent generic singularities.  Colding and Minicozzi \cite{CM2} showed that the shrinking spheres, cylinders and planes are the only entropy-stable self-shrinkers under the mean curvature flow. Their proof involves three main steps:
\begin{itemize}
  \item Show that entropy-stable self-shrinkers that do not split off a line must be $\mathcal{F}$-stable;
  \item Show that $\mathcal{F}$-stability implies mean convexity (i.e. $H\geq 0$); and
  \item Classify the mean convex self-shrinkers.
\end{itemize}
See the definition of $\mathcal{F}$-stable in section 5.

In higher codimension case, the key step in the classification of entropy stable self-shrinkers is also classifying the $\mathcal{F}$-stable self-shrinkers. In this paper, we first formulate the notion of the $\mathcal{F}$-stability for self-shrinkers in arbitrary codimension. In section 3 and 4, we will calculate the full first and second variation formula of the $\mathcal{F}$-functional. The $\mathcal{F}$-stability of a self-shrinker is related to the eigenvalues of a bilinear symmetric form $\mathcal{I}$ on the space of cross-sections in $NM$. In section 5, we will give an necessary condition for closed $\mathcal{F}$-stable self-shrinkers, that is $\{-2,-1\}$ are the only negative eigenvalues of the bilinear symmetric form $\mathcal{I}$.

In section 6 and 7, we will classify the $\mathcal{F}$-stable self-shrinkers in higher codimension. As the codimension increases, the situation becomes more complicated. In section 6, we prove that the sphere $\SS^n(\sqrt{n})$ in $\R^{n+p}$ is the only $\mathcal{F}$-stable one of the minimal submanifolds in spheres, and our proof is inspired in part by the work of J. Simons \cite[Theorem 5.1.1]{Simon} on the instability of minimal submanifolds of spheres.

\begin{thm}
Let $M^n$ be a closed minimal submanifold of $\SS^{n+p-1}(\sqrt{n})\subset\R^{n+p}$. If $M$ is $\mathcal{F}$-stable, then $M$ is the $n$-sphere $\SS^n(\sqrt{n})$.
\end{thm}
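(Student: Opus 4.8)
The plan is to deduce from $\mathcal{F}$-stability an upper bound for the index of the form $\mathcal{I}$ (the number of negative eigenvalues counted with multiplicity), and then — in the spirit of Simons, who exploited that the restrictions to a minimal submanifold of the ambient coordinate functions are low eigenfunctions — to exhibit strictly more negative directions of $\mathcal{I}$ than that bound allows, unless $M=\SS^n(\sqrt n)$.

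First I would record the local picture. On $M\subset\SS^{n+p-1}(\sqrt n)$ one has $|x|^2\equiv n$ and $x\perp T_pM$, hence $x^{\top}=0$ and $\vec H=-x$, so $M$ is a self-shrinker (with $t_0=\tfrac12$); the Gaussian weight is constant along $M$ and the drift term drops out, so up to a positive constant $\mathcal{I}(V,V)=\int_M\big(|\nabla^{\perp}V|^2-\sum_{i,j}\langle A_{ij},V\rangle^2-|V|^2\big)\,d\mu$, i.e. $\mathcal{I}=-L^{\perp}$ with $L^{\perp}V=\Delta^{\perp}V+\sum_{ij}\langle A_{ij},V\rangle A_{ij}+V$. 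By the formulation of $\mathcal{F}$-stability in Section 5 (whose second-variation analysis reduces $\mathcal{F}$-stability to positivity of $\mathcal{I}$ after modding out the dilation and translation directions, exactly as in the codimension-one case), $\mathcal{F}$-stability makes $\mathcal{I}$ positive semidefinite on the $L^2$-orthogonal complement of the span of the dilation variation $\vec H=-x$ and the translation variations $E_1^{\perp},\dots,E_{n+p}^{\perp}$; since this span has dimension at most $n+p+1$, $\mathcal{F}$-stability forces $\operatorname{ind}(\mathcal{I})\le n+p+1$.

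Next, let $\R^{N}\subseteq\R^{n+p}$ be the linear hull of $M$, so $M$ lies fully in $\SS^{N-1}(\sqrt n)$ as a minimal submanifold (a totally geodesic great subsphere) and $N\ge n+1$. Split $NM=\R\nu\oplus\bar NM\oplus\underline{\R^{\,n+p-N}}$, where $\nu=x/\sqrt n$, $\bar NM$ is the normal bundle of $M$ inside $\SS^{N-1}(\sqrt n)$, and the last summand is the flat trivial bundle coming from $\R^{n+p}\ominus\R^N$. Using $\nabla^{\perp}\nu=0$, $\langle A_{ij},\nu\rangle=-\delta_{ij}/\sqrt n$ and minimality, one verifies that $L^{\perp}$ preserves this orthogonal splitting, acting on the radial line by $g\nu\mapsto(\Delta_M g+2g)\nu$ — hence independently of $M$ — and on the flat factor by $W\mapsto\Delta_M W+W$. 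Counting negative eigenspaces: on $\R\nu$ the constants give eigenvalue $-2$, and since $\Delta_M x_A=\langle\vec H,E_A\rangle=-x_A$ each coordinate $x_A|_M$ is a $(-\Delta_M)$-eigenfunction of eigenvalue $1$, so $x_A\nu$ has $\mathcal{I}$-eigenvalue $-1$; the $N$ coordinates are linearly independent on $M$, so this factor contributes index $\ge 1+N$. On the flat factor the constant sections contribute index $\ge n+p-N$. On $\bar NM$ the self-shrinker identity $L^{\perp}(a^{\perp})=a^{\perp}$ — which follows from Codazzi together with $\nabla^{\perp}\vec H=-\nabla^{\perp}x=0$, giving $\Delta^{\perp}a^{\perp}=-\sum_{ij}\langle A_{ij},a^{\perp}\rangle A_{ij}$ — shows that the $\bar NM$-components $a^{\bar\perp}$ of the translations are $\mathcal{I}$-eigensections of eigenvalue $-1$, and these do not all vanish precisely when $\bar NM\neq0$, i.e. when $N\ge n+2$, contributing at least one further negative direction.

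Because the three subbundles are $L^2$-orthogonal and $L^{\perp}$-invariant, the indices add, so $\operatorname{ind}(\mathcal{I})\ge(1+N)+(n+p-N)+\mathbf 1_{\{N\ge n+2\}}=n+p+1+\mathbf 1_{\{N\ge n+2\}}$. Comparing with $\operatorname{ind}(\mathcal{I})\le n+p+1$ forces $N\le n+1$, hence $N=n+1$, so $M$ is a closed $n$-dimensional submanifold of the great sphere $\SS^{n}(\sqrt n)$ and therefore $M=\SS^n(\sqrt n)$. The step I expect to demand the most care is the third paragraph: checking that the decomposition of $NM$ is $L^{\perp}$-invariant with the asserted normal form on each summand, the identity $L^{\perp}(a^{\perp})=a^{\perp}$, and the individual eigenvalue computations; the translation of $\mathcal{F}$-stability into the index bound $\operatorname{ind}(\mathcal{I})\le n+p+1$ is the one ingredient taken from Section 5.
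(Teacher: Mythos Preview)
Your argument is correct, but the route differs from the paper's. You run an index count: from $\mathcal{F}$-stability you extract the bound $\operatorname{ind}(\mathcal{I})\le n+p+1$ (positivity of $\mathcal{I}$ on the orthogonal complement of $\operatorname{span}\{H,E_1^\perp,\dots,E_{n+p}^\perp\}$, which indeed follows immediately from the second-variation formula although it is not literally the statement of Proposition~5.2), then decompose $NM=\R\nu\oplus\bar NM\oplus\underline{\R^{n+p-N}}$ and manufacture $n+p+2$ independent negative directions whenever $N\ge n+2$, forcing $N=n+1$. The paper instead tests a single family of variations $V=\pi_{\bar N_xM}(z)$ (the spherical-normal projection of a constant vector --- exactly your $z^{\bar\perp}$), computes $\mathcal{F}''$ directly, and reads off from $\mathcal{F}''\ge 0$ the pointwise conditions $\langle y,x\rangle\equiv 0$ and $z+y\in\R x\oplus T_xM$; a short linear-algebra argument on the subspace $\{y:\langle y,x\rangle=0\ \forall x\}$ then pins its dimension at $p-1$ and places $M$ in a totally geodesic $\SS^n(\sqrt n)$. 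Both proofs are in the Simons spirit and at heart use the same destabilizing sections $z^{\bar\perp}$; the paper's version is more economical (no spectral decomposition, no eigenvalue bookkeeping), while yours is more structural and incidentally yields precise information on the negative spectrum of $\mathcal{I}$ on each summand.
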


Then we consider the special case ``self-shrinkers with parallel principal normal", i.e, $\nabla^{\perp}\nu=0$, where $\nu$ is a unit length normal vector field parallel to the mean curvature vector field $H$ . While the parallel principal normal condition may seem artificial and is not preserved under the mean curvature flow, it includes the important examples of minimal submanifolds of the sphere $S^{n+p-1}$. We will show that any $\mathcal{F}$-stable self-shrinker with parallel principal normal must be the sphere $\SS^n(\sqrt{n})$ or the plane $\R^n$. We note that in codimension one case, our results reduce to Colding-Minicozzi's results in [6].

\begin{thm}\label{thm-1-2}
Let $x:M^n\ra\R^{n+p}$ be a closed $\mathcal{F}$-stable self-shrinker with parallel principal normal, then $M^n$ is the sphere $\SS^n(\sqrt{n})$.
\end{thm}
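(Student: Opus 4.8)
The plan is to adapt the three‑step scheme of Colding--Minicozzi to arbitrary codimension, using the parallel principal normal hypothesis to keep the analysis as close as possible to the hypersurface case and Simons' averaging trick (as in the proof of Theorem 1.1) to control the directions transverse to $\nu$.

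\emph{Step 1: reduce to $|H|>0$.} After translating we may assume $x_0=0$. Since $M$ is closed there is no closed minimal submanifold of $\R^{n+p}$, so $H\not\equiv 0$; write $H=h\nu$ with $h$ a smooth function (a priori of either sign) and $\nu$ the given globally defined parallel unit normal. The mean curvature vector $H$ is the eigensection of $\mathcal I$ coming from dilations, and in codimension one Colding-Minicozzi's argument shows that for an $\mathcal F$-stable self-shrinker $H$ cannot change sign: otherwise $H$ would fail to be a bottom eigensection, the bottom eigenvalue of $\mathcal I$ would drop strictly below the dilation value and hence below every admissible negative eigenvalue, contradicting the section 5 conclusion that $\{-2,-1\}$ are the only negative eigenvalues of $\mathcal I$ for a closed $\mathcal F$-stable self-shrinker. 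Because $\nu$ is parallel, $H$ changing sign is the same as $h$ changing sign, and the equation governing $h$ is a drift-Laplace equation; so after normalizing $h\ge 0$ the strong maximum principle gives $h>0$ everywhere (the alternative $h\equiv 0$ being already excluded).

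\emph{Step 2: rule out curvature transverse to $\nu$.} With $h>0$, $\nu=H/|H|$ is a genuine parallel principal normal and the normal bundle splits as $NM=\R\nu\oplus\nu^{\perp}$ into parallel subbundles; write $\tilde A:=A-\langle A,\nu\rangle\nu$ for the $\nu^{\perp}$-valued traceless part of the second fundamental form. I would now exploit $\mathcal F$-stability in the directions transverse to $\nu$ by feeding into the bilinear form $\mathcal I$ the test sections built from parallel ambient vector fields of $\R^{n+p}$ (suitably projected to $NM$ and corrected by the admissible dilation and translation directions), and averaging over the ambient unit sphere $\SS^{n+p-1}$ in the spirit of Simons' instability argument as carried out in section 6. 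Combined with the self-shrinker Gauss equation, the Simons-type identity for $\tilde A$ and $A^{\nu}$ on $M$, and the maximum principle on the closed manifold $M$, $\mathcal F$-stability should force the averaged index form to be sign-definite, yielding $\tilde A\equiv 0$ and $A^{\nu}$ umbilic (in the sub-case where $M$ happens to be minimal in a sphere, $\tilde A\equiv 0$ is exactly the content of Theorem 1.1). Once $\tilde A\equiv 0$, the Weingarten relation together with $\nabla^{\perp}\nu=0$ shows that the ambient connection $\bar\nabla_{e_i}$ preserves the subbundle $\nu^{\perp}$, so $\nu^{\perp}\subset\R^{n+p}$ is a fixed $(p-1)$-plane and, since $x^{\top}\in TM$ and $x^{\perp}\in\R\nu$, the connected manifold $M$ lies in the complementary fixed $(n+1)$-plane $(\nu^{\perp})^{\perp}$.

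\emph{Step 3: conclude.} Inside this $\R^{n+1}$, $M$ is a closed self-shrinking hypersurface that is totally umbilic ($\tilde A\equiv 0$ and $A^{\nu}=\frac{h}{n}\,g$) with $h=|H|>0$; the self-shrinker equation pins the radius, and so $M=\SS^n(\sqrt n)$. (Equivalently, one may simply invoke Huisken's classification of closed mean-convex self-shrinkers — the third step of Colding-Minicozzi's argument.) The main obstacle is Step 2: unlike in codimension one, $\mathcal I$ does not split as a direct sum over normal directions — there are genuine cross terms coupling the $\nu$-direction to $\nu^{\perp}$ — so the test sections must be chosen and combined with care, and one must run the full shrinker Simons identities (with the Gaussian weight and the drift Laplacian, and keeping track of the finite-dimensional space of admissible directions) to see that the averaged form has a definite sign. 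A secondary subtlety is the possible zero set of $|H|$, where $H/|H|$ is not a priori defined; this is precisely why the hypothesis supplies a global parallel unit normal $\nu$ parallel to $H$, and why Step 1 must precede the curvature analysis of Step 2.
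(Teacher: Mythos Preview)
Your Step 1 is essentially the paper's argument (Theorem 6.2): if $h=\langle H,\nu\rangle$ vanished somewhere it could not be the bottom eigenfunction of the scalar drift operator $L_\nu$, so there would be an eigenfunction $f$ with eigenvalue $\mu<-2$, and the test section $f\nu$ makes $\mathcal F''<0$ for every $h,y$. (The paper does this directly from the second variation formula rather than routing through Proposition 5.2; that is slightly cleaner, since $f\nu$ need not be an eigensection of the full $\mathcal I$ because of the cross terms $\sigma_{\alpha,n+p}$ you mention later --- though the Rayleigh-quotient inequality $\mu_1(\mathcal I)\le\mu_1(L_\nu)$ does rescue your version.)

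The real divergence is at Step 2, and here there is a genuine gap. Once $|H|>0$ everywhere, the paper does \emph{not} attempt to kill $\tilde A$ by averaging test sections, running shrinker Simons identities, and invoking a maximum principle. Instead it invokes Smoczyk's classification theorem \cite{Smo}: a closed self-shrinker with $|H|\neq 0$ and parallel principal normal is automatically a minimal submanifold of the round sphere $\SS^{n+p-1}(\sqrt n)$. This places you immediately in the setting of Theorem 1.1, and that theorem (the Simons-type argument with $V=\pi_{N_xM}(z)$) finishes the proof. In other words the paper's proof of Theorem \ref{thm-1-2} is one line: Theorem 6.2 $+$ Smoczyk $+$ Theorem 6.3.

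Your proposed Step 2, by contrast, is only a hope: you say stability ``should'' force $\tilde A\equiv 0$ and $A^\nu$ umbilic after averaging and applying Simons identities, but you carry none of this out and you yourself flag it as ``the main obstacle.'' Note also that the clean second-variation computation behind Theorem 1.1 relies on $|x|^2\equiv n$ (so the Gaussian weight is constant and the normal projection of a constant vector is an exact $(-1)$-eigensection of $\mathcal I$); without first knowing $M$ is minimal in the sphere these simplifications disappear, and it is not at all clear the averaged index form has a sign or that any maximum-principle argument is available. The missing ingredient is precisely Smoczyk's theorem, which replaces your entire Step 2 with a citation.
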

\begin{thm}\label{thm-1-3}
Let $x:M^n\ra\R^{n+p}$ be a complete noncompact embedded $\mathcal{F}$-stable self-shrinker with parallel principal normal, with polynomial volume growth and without boundary. Suppose further $|A|^2-|A^{\nu}|^2\leq c$ for some constant $c$, where $A^{\nu}=<\nu,A>$ is the principal second fundamental form. Then $M$ must be the plane $\R^n$.
\end{thm}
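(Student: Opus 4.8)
The plan is to imitate Colding--Minicozzi's codimension-one argument, using the parallel principal normal to cut the $\mathcal{F}$-stability inequality down to a scalar one built from the principal second fundamental form $A^{\nu}$, and then to rule out every mean-convex competitor. Write $\rho=e^{-|x-x_0|^2/(4t_0)}$ for the Gaussian weight and $\mathcal{L}$ for the associated drift Laplacian.

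\emph{Reduction to a scalar inequality.} Since $\nabla^{\perp}\nu=0$, the normal bundle is a parallel orthogonal sum $NM=\R\nu\oplus E$ with $E=\nu^{\perp}\subset NM$, and $A$ splits into $A^{\nu}$ and an $E$-valued part. On normal sections $f\nu$ the symmetric form $\mathcal{I}$ of Sections~4--5 becomes
\[
\mathcal{I}(f\nu,f\nu)=\int_M\Big(|\nabla f|^2-|A^{\nu}|^2 f^2-\f{1}{2t_0}f^2\Big)\,\rho\,d\mu=\langle f,-L^{\nu}f\rangle_{\rho},
\]
where $L^{\nu}=\mathcal{L}+|A^{\nu}|^2+\f{1}{2t_0}$ is the Colding--Minicozzi self-shrinker stability operator with $|A|^2$ replaced by $|A^{\nu}|^2$. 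Using the self-shrinker identities $\mathcal{L}|H|=\big(\f{1}{2t_0}-|A^{\nu}|^2\big)|H|$ and $\mathcal{L}\langle v,\nu\rangle=-|A^{\nu}|^2\langle v,\nu\rangle$ for constant $v\in\R^{n+p}$, the dilation section $|H|\nu$ and the translation sections $\langle v,\nu\rangle\nu$ are eigensections of $\mathcal{I}$ restricted to $\R\nu$ realizing its (at most two) negative eigenvalues, so $\mathcal{F}$-stability forces $\mathcal{I}(f\nu,f\nu)\ge 0$ for every $f$ in the weighted Sobolev space with $\int_M f|H|\,\rho\,d\mu=0$ and $\int_M f\langle v,\nu\rangle\,\rho\,d\mu=0$ for all constant $v$. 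This is exactly Colding--Minicozzi's codimension-one $\mathcal{F}$-stability inequality with the pair $(A,H)$ replaced by $(A^{\nu},|H|)$.

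\emph{Vanishing of $H$.} Since $L^{\nu}|H|$ is a positive multiple of $|H|$ and $|H|\ge 0$, the strong maximum principle gives $|H|\equiv 0$ or $|H|>0$ everywhere. Suppose $|H|>0$. Then $M$ is a complete, noncompact, embedded, mean-convex self-shrinker with parallel principal normal, polynomial volume growth and $|A|^2-|A^{\nu}|^2\le c$; running the Huisken--Colding--Minicozzi classification of mean-convex self-shrinkers with $A$ replaced by $A^{\nu}$ throughout --- the bound on $|A|^2-|A^{\nu}|^2$ being precisely what controls the extra terms contributed by the $E$-part of $A$ in the Simons-type identities and the ensuing integral estimates --- forces $M$ to be a round generalized cylinder $\SS^{k}(\sqrt{2kt_0})\times\R^{n-k}\subset\R^{n+1}\subset\R^{n+p}$ with $1\le k\le n-1$. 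But no such cylinder is $\mathcal{F}$-stable: there $|A^{\nu}|^2\equiv\f{1}{2t_0}$, and on the Euclidean factor any linear function $y_w=\langle w,y\rangle$ ($w\in\R^{n-k}$) satisfies $\mathcal{L}y_w=-\f{1}{2t_0}y_w=-|A^{\nu}|^2 y_w$, the same equation as the translation functions $\langle v,\nu\rangle$; hence $y_w\nu$ is an eigensection of $\mathcal{I}|_{\R\nu}$ with the same negative eigenvalue as the translation directions, yet since $|H|$ and every $\langle v,\nu\rangle$ depend only on the spherical factor while $y_w$ has weighted mean zero on the Euclidean factor, $y_w\nu$ is admissible in the stability inequality above, while $\mathcal{I}(y_w\nu,y_w\nu)=-|A^{\nu}|^2\|y_w\nu\|_{\rho}^2<0$. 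This contradiction shows $|H|\equiv 0$, hence $H\equiv 0$.

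\emph{Conclusion.} With $H\equiv 0$ the self-shrinker equation gives $(x-x_0)^{\perp}\equiv 0$, so the position vector based at $x_0$ is everywhere tangent to $M$; thus $M$ is invariant under the dilations $y\mapsto x_0+\lambda(y-x_0)$, i.e. a smooth complete embedded minimal cone with vertex $x_0$. By completeness $x_0\in M$, and a minimal cone that is a smooth submanifold at its vertex has a totally geodesic link, hence is an affine $n$-plane. Therefore $M=\R^n$.

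The crux is the middle step: proving the classification of mean-convex self-shrinkers with parallel principal normal and bounded $|A|^2-|A^{\nu}|^2$ in arbitrary codimension --- that is, pushing the hypersurface arguments of Huisken and of Colding--Minicozzi through the nontrivial normal bundle, which is exactly where the curvature hypothesis is consumed --- together with the care needed to justify the weighted integrations by parts and the admissibility of the test section $y_w\nu$ on the noncompact cylinder, where polynomial volume growth enters. One also has $\mathcal{I}(W,W)\ge 0$ available for $E$-valued sections $W$; it may be possible to use this to show that the $E$-part of $A$ vanishes in the mean-convex case, thereby reducing the problem to genuine codimension one and allowing a direct appeal to Colding--Minicozzi's theorem.
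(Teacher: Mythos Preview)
Your overall strategy matches the paper's: split into the cases where $|H|>0$ everywhere, where $|H|$ vanishes somewhere but not identically, and where $H\equiv 0$; show the first two force $\mathcal{F}$-instability; conclude $M=\R^n$. Two of your steps, however, do not go through as written.

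First, the strong-maximum-principle dichotomy ``$|H|\equiv 0$ or $|H|>0$ everywhere'' is not justified. The identity $\mathcal{L}|H|=\big(\tfrac{1}{2t_0}-|A^{\nu}|^2\big)|H|$ is only available on the open set $\{H\neq 0\}$, because $\nu=H/|H|$ and hence $|A^\nu|^2$ are undefined where $H$ vanishes; you cannot invoke the strong maximum principle across the zero set without first knowing the equation (with locally bounded coefficients) extends there. The paper avoids this entirely. It treats the case ``$H$ vanishes somewhere but not identically'' by a spectral argument (their Lemma~7.2, following \cite{CM2}): since $L_\nu\langle H,e_{n+p}\rangle=2\langle H,e_{n+p}\rangle$ and this function vanishes somewhere, $-2$ cannot be the bottom of the spectrum of $L_\nu$; a first Dirichlet eigenfunction on a sufficiently large ball then furnishes a compactly supported variation in the $\nu$-direction with $\mathcal{F}''<0$ for every $h,y$.

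Second, the output you assert for the classification in the $|H|>0$ case is too strong. Carrying the Huisken--Colding--Minicozzi argument through with $A$ replaced by $A^{\nu}$ (this is exactly the content of \cite{Smo} and \cite{LW}, which the paper simply cites, and where the bound $|A|^2-|A^{\nu}|^2\le c$ is consumed) yields $M=N^k\times\R^{n-k}$ with $N^k$ an arbitrary closed \emph{minimal submanifold} of $\SS^{k+p-1}(\sqrt{k})$, not a round $\SS^k$ inside some $\R^{n+1}$. Fortunately your instability test still works for any such product: for $N^k$ minimal in the sphere one has $|A^\nu|^2\equiv 1$ on $N^k\times\R^{n-k}$, so $L_\nu=\mathcal{L}+2$ and your variation $y_w\nu$ goes through unchanged. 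This is precisely the paper's Lemma~7.1; you should, however, correct the stated conclusion of the classification step.

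Your final step, deducing $M=\R^n$ from $H\equiv 0$ via the smooth-minimal-cone argument, is correct and in fact more explicit than the paper's one-line conclusion.
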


In the last section, we relate the entropy-stability to the $\mathcal{F}$-stability for self-shrinkers in arbitrary codimension. We show that Colding-Minicozzi's result that an entropy-stable self-shrinker which does not split off a line is $F$-stable also holds true for higher codimension.
Then as Corollarys of Theorem \ref{thm-1-2} and \ref{thm-1-3}, we obtain two classification results of the entropy-stable self-shrinkers.

\section{$\mathcal{F}$-functional and self-shrinking solutions to mean curvature flow}

In this section, we recall the $\mathcal{F}$-functional, Huisken's monotonicity formula, self-shrinking solutions and singularities of the mean curvature flow. Most of the material here is stated in Huisken and Colding-Minicozzi II's paper for the hypersurface case, but it also holds for arbitrary codimension without any change in the proof.

Let $x:M^n\times [0,T)\ra\R^{n+p}$ be a one-parameter family of submanifolds in $\R^{n+p}$, which is called a mean curvature flow if the position vector $x$ evolves in the direction the mean curvature vector $H$, i.e.,
\begin{equation}\label{mcf}
    \f {\pt x}{\pt t}=H.
\end{equation}
Let $\Phi_{(x_0,t_0)}(x,t)=(4\pi(t_0-t))^{-\f n2}\exp (-\f{|x-x_0|^2}{t_0-t})$ be the backward heat kernel. Huisken proved the following monotonicity formula for the mean curvature flow \cite{H2}:
\begin{align}\label{monoto}
    \f d{dt}\int_{M_t}\Phi_{(x_0,t_0)}(x,t)d\mu_t=&-\int_{M_t}\biggl|H+\f{(x-x_0)^{\perp}}{2(t_0-t)}\biggr|^2\Phi_{(x_0,t_0)}(x,t)d\mu_t
\end{align}
A solution of \eqref{mcf} is called self-shrinking about $(x_0,t_0)$ if it satisfies
\begin{align}\label{self-shrink}
    H=-\f{(x-x_0)^{\perp}}{2(t_0-t)}.
\end{align}
A submanifold is said to be a self-shrinker if it is the time $t=0$ slice of a solution which is self-shrinking about $(x_0,t_0)$. That is, we call the submanifold $x:M^n\ra \R^{n+p}$ satisfying
\begin{align}\label{shrinker}
    H=-\f{(x-x_0)^{\perp}}{2t_0}.
\end{align}
a self-shrinker.  The $\mathcal{F}$-functional is defined as
\begin{align*}
    \mathcal{F}_{x_0,t_0}(M_t)=&(4\pi t_0)^{-\f n2}\int_{M_t}e^{-\f {|x-x_0|^2}{4t_0}}d\mu_t\\
    =&\int_{M_t}\Phi_{(x_0,t_0)}(x,0)d\mu_t\\
    =&\int_{M_t}\Phi_{(x_0,t_0+t)}(x,t)d\mu_t.
\end{align*}
It is easy to see that $\mathcal{F}$-functional is invariant under scalings, i.e., for any $\alpha>0$, we have
\begin{equation}
    \mathcal{F}_{\alpha x_0, \alpha^2t_0}(\alpha M_t)=\mathcal{F}_{x_0,t_0}(M_t).
\end{equation}
Moreover, Huisken's monotonicity formula \eqref{monoto} implies for $t>s$,
\begin{align*}
    \mathcal{F}_{x_0,t_0}(M_t)=&\int_{M_t}\Phi_{(x_0,t_0+t)}(x,t)d\mu_t\\
    \leq &\int_{M_s}\Phi_{(x_0,t_0+t)}(x,s)d\mu_s\\
    =&\int_{M_s}\Phi_{(x_0,t_0+t-s)}(x,0)d\mu_s\\
    =&\mathcal{F}_{x_0,t_0+t-s}(M_s).
\end{align*}
The entropy of a submanifold is defined as the supremum of the $\mathcal{F}$-functional
\begin{align}\label{entropy}
    \lambda(M_t)=\sup_{x_0\in\R^{n+p},t_0>0}\mathcal{F}_{x_0,t_0}(M_t).
\end{align}
Then for any small positive constant $\epsilon$, there exists a pair $(x_0,t_0)$ such that
\begin{align*}
     \lambda(M_t)-\epsilon\leq \mathcal{F}_{x_0,t_0}(M_t),
\end{align*}
and therefore,
\begin{align*}
    \lambda(M_t)-\epsilon\leq \mathcal{F}_{x_0,t_0}(M_t)\leq \mathcal{F}_{x_0,t_0+t-s}(M_s)\leq \lambda(M_s).
\end{align*}
Since $\epsilon$ is arbitrary, we have
\begin{align}
    \lambda(M_t)\leq \lambda(M_s),\qquad (t>s).
\end{align}
We summarize the result in the following proposition.
\begin{prop}[Lemma 1.11 in \cite{CM2}]
The entropy $\lambda(M_t)$ defined in \eqref{entropy} is non-increasing in $t$ under the mean curvature flow.
\end{prop}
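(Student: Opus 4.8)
The plan is to deduce the statement directly from Huisken's monotonicity formula \eqref{monoto}, following the computation already sketched in the discussion preceding the proposition. The first step is to record the heat-kernel representation of the $\mathcal{F}$-functional: writing out the definitions of $\mathcal{F}_{x_0,t_0}$ and of the backward heat kernel $\Phi$, one has
\[
\mathcal{F}_{x_0,t_0}(M_t)=\int_{M_t}\Phi_{(x_0,t_0)}(x,0)\,d\mu_t=\int_{M_t}\Phi_{(x_0,t_0+t)}(x,t)\,d\mu_t ,
\]
since evaluating the centre-$(x_0,t_0+t)$ kernel at time $t$ reproduces precisely the Gaussian weight $(4\pi t_0)^{-n/2}e^{-|x-x_0|^2/(4t_0)}$ attached to $\mathcal{F}_{x_0,t_0}$.

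Next, for a fixed pair $(x_0,t_0)$ and for times $t>s$, I would apply \eqref{monoto} with the fixed centre $(x_0,t_0+t)$ on the interval $[s,t]$. Because the right-hand side of \eqref{monoto} is manifestly nonpositive, integrating in time gives
\[
\int_{M_t}\Phi_{(x_0,t_0+t)}(x,t)\,d\mu_t\;\leq\;\int_{M_s}\Phi_{(x_0,t_0+t)}(x,s)\,d\mu_s ,
\]
and the right-hand side equals $\int_{M_s}\Phi_{(x_0,t_0+t-s)}(x,0)\,d\mu_s=\mathcal{F}_{x_0,t_0+t-s}(M_s)$ by the same representation applied with time parameter $t_0+t-s$. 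Hence $\mathcal{F}_{x_0,t_0}(M_t)\leq \mathcal{F}_{x_0,t_0+t-s}(M_s)$ for every admissible centre.

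The final step is to pass to the supremum over centres. Given $\epsilon>0$, choose $(x_0,t_0)$ with $\lambda(M_t)-\epsilon\leq \mathcal{F}_{x_0,t_0}(M_t)$; combining this with the inequality just proved and with the trivial bound $\mathcal{F}_{x_0,t_0+t-s}(M_s)\leq \lambda(M_s)$ yields
\[
\lambda(M_t)-\epsilon\leq \mathcal{F}_{x_0,t_0}(M_t)\leq \mathcal{F}_{x_0,t_0+t-s}(M_s)\leq \lambda(M_s),
\]
and letting $\epsilon\downarrow 0$ gives $\lambda(M_t)\leq \lambda(M_s)$ whenever $t>s$, which is the claim.

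I do not expect any genuine obstacle: the argument is essentially a reshuffling of \eqref{monoto}. The only points that need care are the bookkeeping of the time-shift $t_0\mapsto t_0+t$ in the heat-kernel representation of $\mathcal{F}$, and checking that Huisken's formula \eqref{monoto} is legitimately applicable to the family under consideration (finiteness of the weighted integrals and justification of differentiation under the integral sign), which holds under the standing assumptions on the flow exactly as in the hypersurface case treated in \cite{H2} and \cite{CM2}.
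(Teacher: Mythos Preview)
Your proposal is correct and follows exactly the argument given in the paper: the heat-kernel representation of $\mathcal{F}_{x_0,t_0}$, the application of Huisken's monotonicity \eqref{monoto} with shifted centre $(x_0,t_0+t)$, and the $\epsilon$-approximation to pass to the supremum are precisely the steps the paper carries out in the discussion immediately preceding the proposition. There is nothing to add.
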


Self-shrinkers play an important role in the study of mean curvature flow, since they describe all the possible tangent flows at a given singularity of a mean curvature flow.   Huisken first used the monotonicity formula to prove that the flow is asymptotically self-similar near type-I singularities. Later Ilmanen and White extended Huisken's formula to weak solutions and proved asymptotic self-similarity for tangent flows at all singularities, in the following sense:

At a given singularity $(x_0,t_0)$ of the mean curvature flow, we can blow up by setting
\begin{align*}
    M_t^j=c_j\biggl(M_{c_j^{-2}t+t_0}-x_0\biggr).
\end{align*}
That is, by first translating $M_t$ in space-time to move $(x_0,t_0)$ to $(0,0)$ and then taking a sequence of parabolic dilations $(x,t)\ra (c_jx,c_j^2t)$ with $c_j\ra\infty$. By using Huisken's monotonicity formula and the standard compactness theorem, we can extract a subsequence of $M_t^j$'s converging weakly to a limiting flow, which will be called a \emph{tangent flow} at $(x_0,t_0)$. A tangent flow will achieve equality in the monotonicity formula, and will be a self-shrinking solution to the mean curvature flow.

By using Huisken's monotonicity formula, Colding-Minicozzi proved that
\begin{prop}
Any time-slice of a tangent flow at a given singularity has polynomial volume growth.
\end{prop}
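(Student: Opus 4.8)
The plan is to combine Huisken's monotonicity formula with the elementary fact that any submanifold of finite entropy has polynomial volume growth. Since a tangent flow is itself a self-shrinking solution, it is self-shrinking about some $(y_0,s_0)$, so after the usual normalization its time slices are the dilations of a single self-shrinker $\Sigma$. As polynomial volume growth with exponent $n$ is preserved by dilation (only the constant changes) and by translation, it suffices to show that one such slice $\Sigma$ satisfies $\mathrm{Vol}(B_R\cap\Sigma)\le CR^n$ for all $R>0$, where $B_R$ denotes the Euclidean ball of radius $R$ centered at the origin in $\R^{n+p}$.

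First I would bound the entropy of the tangent flow. The smooth closed submanifold $M_0$ from which the original flow starts has finite entropy $\lambda(M_0)<\infty$ (for compact smooth $M_0$ the supremum defining $\lambda$ is attained and finite, since $\mathcal{F}_{x_0,t_0}(M_0)\to 0$ as $t_0\to\infty$ or $|x_0|\to\infty$, and $\to 1$ as $t_0\to 0$ with $x_0\in M_0$). Because $\mathcal{F}_{x_0,t_0}$ is invariant under simultaneous translation and parabolic dilation, the rescaled flows $M^j_t=c_j(M_{c_j^{-2}t+t_0}-x_0)$ satisfy $\lambda(M^j_t)=\lambda(M_{c_j^{-2}t+t_0})$, which by the monotonicity of the entropy (the previous Proposition) is at most $\lambda(M_0)$. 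The uniform entropy bound, via the monotonicity formula, yields uniform local area bounds, so the weak convergence $M^j_t\to M_t$ is a convergence of Radon measures; since $x\mapsto e^{-|x-x_0|^2/(4t_0)}$ is nonnegative and continuous, $\mathcal{F}_{x_0,t_0}(\cdot)$ is lower semicontinuous under such convergence, whence $\lambda(M_t)\le\liminf_j\lambda(M^j_t)\le\lambda(M_0)$. In particular $\lambda(\Sigma)\le\lambda(M_0)=:\lambda<\infty$.

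Finally I would extract polynomial volume growth from finite entropy. Applying the definition of the entropy with center $0$ and scale $t_0=R^2$ gives
\[
(4\pi R^2)^{-\f n2}\int_\Sigma e^{-\f{|x|^2}{4R^2}}\,d\mu\ \le\ \lambda .
\]
On $B_R\cap\Sigma$ the integrand is at least $e^{-1/4}$, so
\[
\mathrm{Vol}(B_R\cap\Sigma)\ \le\ e^{1/4}(4\pi)^{\f n2}\,\lambda\,R^n ,
\]
which is the assertion. The only point needing genuine care is the second step: guaranteeing that the $\mathcal{F}$-bound survives the passage to the (possibly noncompact, possibly higher-multiplicity) limiting flow. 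This is exactly where Huisken's monotonicity formula is used a second time, to supply the uniform density and local area estimates that make the limit a Radon-measure limit; once that is in place, lower semicontinuity of $\mathcal{F}_{x_0,t_0}$ is automatic and — in contrast to the reverse inequality — no control of mass at spatial infinity is needed, so the upper bound on the entropy of the tangent flow, and hence the volume bound above, goes through.
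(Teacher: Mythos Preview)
The paper does not actually prove this proposition: it attributes the result to Colding--Minicozzi \cite{CM2} and states it without argument. Your proposal is precisely the Colding--Minicozzi argument --- bound the entropy of the tangent flow by $\lambda(M_0)$ using scale-invariance of $\mathcal{F}$, monotonicity of entropy along the flow, and lower semicontinuity under the Radon-measure convergence guaranteed by the uniform local area bounds coming from Huisken's formula; then convert finite entropy into the volume bound $\mathrm{Vol}(B_R\cap\Sigma)\le e^{1/4}(4\pi)^{n/2}\lambda\,R^n$ by taking $t_0=R^2$ in $\mathcal{F}_{0,t_0}$ --- and it is correct, with the only implicit hypothesis being that the initial submanifold has finite entropy (e.g.\ is closed), which is the standard standing assumption in this context.
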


We say that a submanifold $M^n$ in $\R^{n+p}$  has polynomial volume growth if there exist constants $C$ and $d$ such that for all $r\geq 1$, there holds
\begin{equation*}
    \textrm{Vol} (B(r)\cap M)\leq Cr^d,
\end{equation*}
where $B_r$ denotes an Euclidean ball with radius $r$. The proposition says that any self-shrinker which arises as the blow up at a given singularity in the mean curvature flow must have polynomial volume growth.

\section{The first variation of $\mathcal{F}$-functional}

In this section, we will give a variational characterization of self-shrinkers. Let $M^n$ be a submanifold in $\R^{n+p}$, we choose an orthonormal frame field $\{e_i,e_{\alpha}\}$ along $M$ such that $e_i\in TM$ and $e_{\alpha}\in NM$. Throughout the paper, we always make the following convention on the range of the indices:
\begin{equation*}
    1\leq i,j,k\leq n;\qquad n+1\leq \alpha,\beta,\gamma\leq n+p
\end{equation*}
Denote $h^{\alpha}_{ij}$ the components of the second fundamental form of $M$, $\sigma_{\alpha\beta}=\sum\limits_{i,j}h^{\alpha}_{ij}h^{\beta}_{ij}$.

We say $M_s\subset\R^{n+p}$ is a variation of $M$ if it is given by a one parameter family of immersions $X_s:M\ra\R^{n+p}$ with $X_0$ equal to the identity. The vector field $\f{\pt}{\pt s}\mid_{s=0}X_s=V$ is called the variation vector field. We only consider the normal variation vector field $V$, which can be expressed as $V=\sum\limits_{\alpha}V^{\alpha}e_{\alpha}$.

For the functional $\mathcal{F}_{x_0,t_0}$ defined in the introduction, we say that $M$ is a critical point for $\mathcal{F}_{x_0,t_0}$ if it is critical with respect to all normal variations in $M$ and all variations in $x_0$ and $t_0$. Let $x_s$ and $t_s$ be variations of $x_0$ and $t_0$ with $x_0'=y$ and $t_0'=h$.
Recall that the volume element of  $M_s\subset\R^{n+p}$ is given by
\begin{equation*}
    d\mu_s=\sqrt{\det g(x,s)}dx=\f{\sqrt{\det g(x,s)}}{\sqrt{\det g(x,0)}}d\mu_0.
\end{equation*}
We define the function
\begin{equation*}
    J(x,s)=\f{\sqrt{\det g(x,s)}}{\sqrt{\det g(x,0)}}
\end{equation*}
Then we have (cf. \cite{L})
\begin{align}
    J'(x,0)=&-<V,H>,\label{j-1}\\
    J''(x,0)=&|\nabla^{\perp}V|^2-\sum_{\alpha,\beta}\sigma_{\alpha\beta}V^{\alpha}V^{\beta}+<V,H>^2\nonumber\\
             &\qquad +\textrm{div} (\bar{\nabla}_VV)^T-<\bar{\nabla}_VV,H>,\label{j-2}
\end{align}
where $\nabla^{\perp}$, $\textrm{div}$ denote the normal connection of the normal bundle and the divergence on $M$, respectively.

Next, we denote the integrand of the functional $\mathcal{F}_{x_s,t_s}(M_s)$ by
\begin{equation*}
    I(s)=(4\pi t_s)^{-\f n2}e^{-\f{|x-x_s|^2}{4t_s}},
\end{equation*}
then by a direct calculation, we get
\begin{align}
    I'(0)=&I(0)\left(-\left\langle\f{x-x_0}{2t_0},V-y\right\rangle+h\left(\f{|x-x_0|^2}{4t_0^2}-\f n{2t_0}\right)\right)\label{I-1}\\
    I''(0)=&I(0)\left[\left(-<\f{x-x_0}{2t_0},V-y>+h(\f{|x-x_0|^2}{4t_0^2}-\f n{2t_0})\right)^2\right.\nonumber\\
    &\qquad \left.-<(\f{x-x_0}{2t_0})',V-y>-<\f{x-x_0}{2t_0},(V-y)'>\right.\label{I-2}\\
    &\qquad \left.h'(\f{|x-x_0|^2}{4t_0^2}-\f n{2t_0})+h(\f{|x-x_0|^2}{4t_0^2}-\f n{2t_0})'\right]\nonumber
\end{align}

From \eqref{j-1} and \eqref{I-1}, we obtain the following first variation formula:
\begin{lem}\label{lem-v1}
Let $M_s\subset\R^{n+p}$ be a variation of $M$ with normal variation vector field $M_0'=V$. If $x_s$ and $t_s$ are variations of $x_0$ and $t_0$ with $x_0'=y$ and $t_0'=h$, then the first variation of $F_{x_0,t_0}$ is equal to
\begin{eqnarray}\label{v-01}
 \mathcal{F}_{x_0,t_0}'&=& (4\pi t_0)^{-\f n2}\int_M\left[-\left<H+\f {(x-x_0)^{\perp}}{2t_0},V\right>\right.\nonumber\\
&&\qquad +\left.h\left(\f{|x-x_0|^2}{4t_0^2}-\f n{2t_0}\right)+\f {<x-x_0,y>}{2t_0}\right]e^{-\f{|x-x_0|^2}{4t_0}}d\mu\label{v-1}
\end{eqnarray}
\end{lem}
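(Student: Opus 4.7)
The plan is to differentiate under the integral sign, using the two ingredients already written down: the derivative $J'(x,0)$ of the Jacobian of the ambient variation (equation \eqref{j-1}) and the derivative $I'(0)$ of the weight (equation \eqref{I-1}). Concretely, write
\begin{equation*}
\mathcal{F}_{x_s,t_s}(M_s)=\int_M I(s)\,J(x,s)\,d\mu,
\end{equation*}
so that, using $J(x,0)=1$, the product rule gives
\begin{equation*}
\mathcal{F}'_{x_0,t_0}=\int_M \bigl(I'(0)+I(0)J'(x,0)\bigr)\,d\mu.
\end{equation*}
This reduces the lemma to algebraic manipulation of the two cited formulas.

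Next I would substitute \eqref{j-1} and \eqref{I-1} into the above expression and collect terms. This yields
\begin{equation*}
\mathcal{F}'_{x_0,t_0}=\int_M I(0)\!\left[-\langle V,H\rangle-\left\langle\tfrac{x-x_0}{2t_0},V-y\right\rangle+h\!\left(\tfrac{|x-x_0|^2}{4t_0^2}-\tfrac{n}{2t_0}\right)\right]d\mu.
\end{equation*}
The only geometric point to notice is that $V$ is, by assumption, a \emph{normal} variation field, so $\langle (x-x_0)/(2t_0),V\rangle=\langle (x-x_0)^{\perp}/(2t_0),V\rangle$. Combining this with the $-\langle V,H\rangle$ term produces the expected first factor $-\langle H+(x-x_0)^{\perp}/(2t_0),V\rangle$, which is precisely the self-shrinker equation \eqref{shrinker} paired with $V$. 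The remaining piece $\langle (x-x_0)/(2t_0),y\rangle$ gives the $\langle x-x_0,y\rangle/(2t_0)$ term, and the $h$-term is already in the desired form. Finally, pulling the factor $(4\pi t_0)^{-n/2}$ out of $I(0)$ produces exactly the right-hand side of \eqref{v-1}.

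There is no genuine obstacle here; this is a direct chain-rule computation, and the only subtlety is the decomposition $x-x_0=(x-x_0)^{T}+(x-x_0)^{\perp}$ used to turn a Euclidean inner product into a normal one. A minor verification worth making is that, since we are evaluating at $s=0$ and $J(x,0)=1$, no contribution from $J'(x,0)$ appears multiplied by any derivative of $I$, so all cross-terms that would complicate the second variation \eqref{I-2} are absent at first order. The formula \eqref{v-1} then appears in its stated form, exhibiting simultaneously: the Euler--Lagrange equation $H+(x-x_0)^{\perp}/(2t_0)=0$ (criticality with respect to $V$), together with the linear-in-$h$ and linear-in-$y$ conditions that characterize critical points with respect to the base point $x_0$ and scale $t_0$.
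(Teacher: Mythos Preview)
Your argument is correct and is essentially the same as the paper's own proof: the paper also writes $\mathcal{F}'=\int_M(I'J+IJ')\,d\mu_0$ and substitutes \eqref{j-1} and \eqref{I-1} to obtain \eqref{v-1} directly. Your version is slightly more explicit in noting that $V$ normal forces $\langle x-x_0,V\rangle=\langle (x-x_0)^{\perp},V\rangle$, but otherwise the two proofs coincide.
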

\begin{proof}
From \eqref{j-1} and \eqref{I-1}, we have
\begin{align*}
    \mathcal{F}_{x_0,t_0}'=&\int_M(I'J+IJ')d\mu_0\\
    =&(4\pi t_0)^{-\f n2}\int_M\left[-\left<H+\f {(x-x_0)^{\perp}}{2t_0},V\right>\right.\\
&\qquad +\left.h\left(\f{|x-x_0|^2}{4t_0^2}-\f n{2t_0}\right)+\f {<x-x_0,y>}{2t_0}\right]e^{-\f{|x-x_0|^2}{4t_0}}d\mu_0
\end{align*}
\end{proof}

Equations \eqref{j-2} and \eqref{I-2} will be used in the next section to calculate the second variation.

From the first variation formula \eqref{v-01}, we can see that if $M$ is a critical point of $\mathcal{F}_{x_0,t_0}$, then it must satisfy
\begin{equation*}
    H=-\f{(x-x_0)^{\perp}}{2t_0}.
\end{equation*}
We will show that the converse is also true, that is a self-shrinker must be the critical point of the functional $\mathcal{F}_{x_0,t_0}$. For simplicity, we only consider $x_0=0$ and $t_0=\f 12$, i.e., the self-shrinker equation is
\begin{equation}\label{shrinker-1}
    H=-x^{\perp}.
\end{equation}

In the study of self-shrinker, the following elliptic operator was first introduced by Colding-Minicozzi\footnote{Note that our notation is different with Colding-Minicozzi's, that is due to the different normalization of the self-shrinker equation \eqref{shrinker-1}, which is not essential.} (see (3.7) in \cite{CM2}):
\begin{equation}\label{ope-1}
\cal{L}=\Delta-<x,\nabla(\cdot)>=e^{\f{|x|^2}{2}}\textrm{div}(e^{-\f{|x|^2}{2}}\nabla\cdot),
\end{equation}
where $\Delta$, $\nabla$ and $\textrm{div}$ denote the Laplacian, gradient and divergent operator on the self-shrinker respectively, $<\cdot,\cdot>$ denotes the standard inner product in $\R^{n+p}$. We apply the operator $\mathcal{L}$ on some natural geometric quantities and obtain the following basic equations. (cf. \cite{CL2,CM2,LW})
\begin{lem}\label{lem-3-2}
Let $x:M^n\ra \R^{n+p}$ be an n-dimensional complete self-shrinker, then
\begin{align}
    \mathcal{L}x_i=&-x_i,\\
    \f 12\mathcal{L}|x|^2=&n-|x|^2,
\end{align}
where $x_i$ is the i-th component of the position vector $x$.
\end{lem}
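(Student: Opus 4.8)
The plan is to verify both identities by a direct computation using the self-shrinker equation $H=-x^\perp$ together with the structure equations of a submanifold of Euclidean space. The operator $\mathcal{L}=\Delta-\langle x,\nabla\cdot\rangle$ is built from the intrinsic Laplacian and the drift term $\langle x,\nabla\cdot\rangle$, so the task reduces to computing $\Delta$ and $\nabla$ of the scalar functions $x_i$ and $|x|^2$ restricted to $M$.

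For the first identity, I would start from the standard fact that the Laplacian of the position vector of an immersed submanifold is the mean curvature vector: $\Delta x = H$ (as an $\R^{n+p}$-valued function), hence componentwise $\Delta x_i = H_i$, the $i$-th component of $H$. For the drift term, $\nabla x_i$ is just the tangential part of the $i$-th coordinate vector $\partial_i$, so $\langle x,\nabla x_i\rangle = \langle x^T, \partial_i\rangle = \langle x^T, \partial_i\rangle$ equals the $i$-th component of $x^T$, the tangential part of $x$. Therefore $\mathcal{L}x_i = H_i - (x^T)_i = (H + \text{something})$; more precisely $H_i - (x^T)_i$. Now invoke the self-shrinker equation $H=-x^\perp$, so $H_i = -(x^\perp)_i$, and combining, $\mathcal{L}x_i = -(x^\perp)_i - (x^T)_i = -(x^\perp + x^T)_i = -x_i$, since $x = x^\perp + x^T$. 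This gives the first equation. First I would set up the frame $\{e_i,e_\alpha\}$ and write everything in components to make sure the tangential/normal decomposition is handled cleanly.

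For the second identity, I would compute $\frac12\Delta|x|^2$ and $\frac12\langle x,\nabla|x|^2\rangle$ separately. Using Bochner-type bookkeeping, $\frac12\Delta|x|^2 = |\nabla^M x|^2 + \langle x, \Delta x\rangle = n + \langle x, H\rangle$, where $|\nabla^M x|^2 = n$ because the tangential derivatives of the position vector form an orthonormal frame (equivalently, $\sum_i |e_i|^2 = n$), and $\langle x,\Delta x\rangle = \langle x, H\rangle$ by $\Delta x = H$. Since $H$ is normal, $\langle x, H\rangle = \langle x^\perp, H\rangle = -|x^\perp|^2 = -|H|^2$ by the shrinker equation. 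For the drift term, $\nabla |x|^2 = 2x^T$, so $\frac12\langle x,\nabla|x|^2\rangle = \langle x, x^T\rangle = |x^T|^2$. Hence $\frac12\mathcal{L}|x|^2 = n + \langle x,H\rangle - |x^T|^2 = n - |x^\perp|^2 - |x^T|^2 = n - |x|^2$, using orthogonality of the decomposition $x = x^T + x^\perp$. That establishes the second equation.

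The computation is essentially routine once the two standard submanifold facts — $\Delta x = H$ and $|\nabla^M x|^2 = n$ — are in place; these follow from differentiating $x$ twice and taking tangential/normal projections, using the Gauss formula $\bar\nabla_{e_i} e_j = \nabla_{e_i} e_j + h_{ij}^\alpha e_\alpha$. The only mild subtlety, and the step I would write out most carefully, is keeping the tangential and normal parts of $x$ straight throughout and correctly applying $H=-x^\perp$ to convert the inner products $\langle x,H\rangle$ into $-|x^\perp|^2$; this is where sign errors typically creep in. No genuine obstacle arises — the result is a direct consequence of the defining equation and elementary submanifold geometry, and the references \cite{CL2,CM2,LW} confirm these are known formulae in the literature.
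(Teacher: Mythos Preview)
Your computation is correct and is precisely the standard direct argument: use $\Delta x=H$ and $\nabla x_i=(\partial_i)^T$ for the first identity, and the Bochner-type expansion $\tfrac12\Delta|x|^2=n+\langle x,H\rangle$ together with $\nabla|x|^2=2x^T$ for the second, then invoke $H=-x^\perp$ and the orthogonal decomposition $x=x^T+x^\perp$. The paper itself does not supply a proof of this lemma but simply records it with a reference to \cite{CL2,CM2,LW}; the computation you wrote out is exactly the one carried out in those sources, so your approach coincides with the intended one.
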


The operator $\cal{L}$ is self-adjoint in a weighted $L^2$ space. The next two results were proved by Colding-Minicozzi \cite{CM2} for hypersurface self-shrinkers but can be stated in the same way for self-shrinkers in arbitrary codimension.
\begin{lem}\label{lem2-1}
If $M^n\subset \R^{n+p}$ is a submanifold, $u$ is a $C^1$ function
with compact support, and $v$ is a $C^2$ function, then
\begin{equation}
\int_Mu(\cal{L}v)e^{-\f{|x|^2}{2}}=-\int_M<\nabla v,\nabla
u>e^{-\f{|x|^2}{2}}.
\end{equation}
\end{lem}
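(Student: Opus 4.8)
The plan is to recognize this as the standard integration-by-parts (self-adjointness) identity for the drift Laplacian $\mathcal{L}$ with respect to the Gaussian weight, and to derive it directly from the divergence form of $\mathcal{L}$ recorded in \eqref{ope-1}. The only subtlety is that $M$ need not be compact, but this is precisely what the compact-support hypothesis on $u$ is for.

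First I would rewrite the left-hand integrand using the second expression for $\mathcal{L}$ in \eqref{ope-1}: since $\mathcal{L}v=e^{|x|^2/2}\,\textrm{div}\bigl(e^{-|x|^2/2}\nabla v\bigr)$, we have
\[
u\,(\mathcal{L}v)\,e^{-\f{|x|^2}{2}}=u\,\textrm{div}\bigl(e^{-\f{|x|^2}{2}}\nabla v\bigr).
\]
Next I would apply the Leibniz rule for the divergence on $M$ to the vector field $u\,e^{-|x|^2/2}\nabla v$, namely
\[
\textrm{div}\bigl(u\,e^{-\f{|x|^2}{2}}\nabla v\bigr)=u\,\textrm{div}\bigl(e^{-\f{|x|^2}{2}}\nabla v\bigr)+e^{-\f{|x|^2}{2}}\langle\nabla u,\nabla v\rangle,
\]
so that $u\,(\mathcal{L}v)e^{-|x|^2/2}=\textrm{div}\bigl(u\,e^{-|x|^2/2}\nabla v\bigr)-e^{-|x|^2/2}\langle\nabla u,\nabla v\rangle$.

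Then I would integrate this identity over $M$. The vector field $u\,e^{-|x|^2/2}\nabla v$ is $C^1$ (here $u\in C^1$, $v\in C^2$, and the weight is smooth) and has compact support because $u$ does; hence by the divergence theorem on $M$ (which has no boundary, or equivalently the support of $u$ is contained in the interior) the integral of $\textrm{div}\bigl(u\,e^{-|x|^2/2}\nabla v\bigr)$ vanishes. Rearranging the remaining two terms gives
\[
\int_M u\,(\mathcal{L}v)\,e^{-\f{|x|^2}{2}}=-\int_M\langle\nabla v,\nabla u\rangle\,e^{-\f{|x|^2}{2}},
\]
which is the claimed formula. There is no real obstacle here; the one point worth stating carefully in the write-up is the justification that the boundary/divergence term drops, for which the compact support of $u$ together with the smoothness of $e^{-|x|^2/2}$ on that support is exactly what is needed, and no decay or integrability assumption on $M$ itself is required.
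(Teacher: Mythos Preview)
Your argument is correct and is exactly the standard integration-by-parts derivation for the drift Laplacian in divergence form; the paper itself does not write out a proof but simply cites Colding--Minicozzi \cite{CM2}, where the same computation appears. There is nothing to add: your use of \eqref{ope-1}, the Leibniz rule, and the divergence theorem with compactly supported $u$ is precisely the intended route.
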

\begin{corr}\label{cor2-1}
Suppose that $M^n\subset\R^{n+p}$ is a complete submanifold without
boundary, if $u, v$ are $C^2$ functions with
\begin{equation*}
\int_M(|u\nabla v|+|\nabla u||\nabla
v|+|u\cal{L}v|)e^{-\f{|x|^2}{2}}<+\infty,
\end{equation*}
then we get
\begin{equation}
\int_Mu(\cal{L}v)e^{-\f{|x|^2}{2}}=-\int_M<\nabla v,\nabla
u>e^{-\f{|x|^2}{2}}.
\end{equation}
\end{corr}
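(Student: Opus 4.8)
The plan is to deduce Corollary \ref{cor2-1} from Lemma \ref{lem2-1} by a standard cut-off and dominated-convergence argument. First I would fix a smooth radial cut-off function $\varphi:\R^{n+p}\ra[0,1]$ with $\varphi\equiv 1$ on $B_1$, $\varphi\equiv 0$ outside $B_2$, and $|\bar\nabla\varphi|\leq C$; for each $R>0$ set $\varphi_R(x)=\varphi(x/R)$, so that $|\bar\nabla\varphi_R|\leq C/R$ and $\varphi_R\to 1$ pointwise as $R\to\infty$. Then $u_R:=\varphi_R u$ is a $C^1$ function with compact support on $M$ (intersecting $M$ with the compact set $\{\varphi_R>0\}$), so Lemma \ref{lem2-1} applies with the pair $(u_R,v)$ and gives
\begin{equation*}
\int_M \varphi_R u\,(\cal{L}v)\,e^{-\f{|x|^2}{2}}
= -\int_M \langle\nabla v,\nabla(\varphi_R u)\rangle\,e^{-\f{|x|^2}{2}}
= -\int_M \varphi_R\langle\nabla v,\nabla u\rangle e^{-\f{|x|^2}{2}}
  -\int_M u\,\langle\nabla v,\nabla\varphi_R\rangle e^{-\f{|x|^2}{2}}.
\end{equation*}

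Next I would pass to the limit $R\to\infty$ in each of the three integrals. For the left-hand side and the first term on the right, since $|\varphi_R u\,\cal{L}v|\leq |u\,\cal{L}v|$ and $|\varphi_R\langle\nabla v,\nabla u\rangle|\leq |\nabla u||\nabla v|$, and both $|u\,\cal{L}v|e^{-|x|^2/2}$ and $|\nabla u||\nabla v|e^{-|x|^2/2}$ are integrable by hypothesis, the dominated convergence theorem yields convergence to $\int_M u(\cal{L}v)e^{-|x|^2/2}$ and $-\int_M\langle\nabla v,\nabla u\rangle e^{-|x|^2/2}$ respectively. For the error term, I bound
\begin{equation*}
\left|\int_M u\,\langle\nabla v,\nabla\varphi_R\rangle e^{-\f{|x|^2}{2}}\right|
\leq \frac{C}{R}\int_{M\cap(B_{2R}\setminus B_R)} |u||\nabla v|\,e^{-\f{|x|^2}{2}}
\leq \frac{C}{R}\int_M |u||\nabla v|\,e^{-\f{|x|^2}{2}},
\end{equation*}
using $|\nabla\varphi_R|\leq|\bar\nabla\varphi_R|\leq C/R$; since $\int_M|u\nabla v|e^{-|x|^2/2}<\infty$ by hypothesis, this tends to $0$ as $R\to\infty$. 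Combining the three limits gives the claimed identity.

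The main point to be careful about is simply that the three finiteness hypotheses in the statement are exactly what is needed to run dominated convergence on the two "honest" terms and to kill the cut-off error term; there is no serious obstacle here, the only mild subtlety being the book-keeping that $u_R$ genuinely has compact support on $M$ (which uses that $\{\varphi_R>0\}$ is a bounded subset of $\R^{n+p}$, so its intersection with $M$ is contained in a compact set, though $M$ itself need not be compact). One could equally phrase the argument with the weight absorbed, writing $\cal{L}=e^{|x|^2/2}\mathrm{div}(e^{-|x|^2/2}\nabla\,\cdot\,)$ and integrating the divergence of $\varphi_R u\,e^{-|x|^2/2}\nabla v$ over $M$, but the cut-off version above is the cleanest.
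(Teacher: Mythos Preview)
Your proof is correct and is precisely the standard cut-off plus dominated-convergence argument that the paper defers to (the paper does not supply its own proof here, simply citing Colding--Minicozzi \cite{CM2}, whose argument is exactly this one). The one small caveat you already flagged --- that $u_R$ has compact support on $M$ --- implicitly uses that $M$ is properly immersed (so that $M\cap\overline{B_{2R}}$ is compact in $M$); if you want to avoid this assumption you can replace the extrinsic cut-off by an intrinsic one based on geodesic distance, with the same gradient bound $|\nabla\phi_R|\leq C/R$, and the rest of the argument is unchanged.
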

We apply the self-adjointness of $\mathcal{L}$ to obtain the following two results, the proof is simple and similar as in Colding-Minicozzi's paper (see Lemma 3.25 and Corollar 3.34 in \cite{CM2}).
\begin{lem}\label{lem-v3-5}
Let $x:M^n\ra \R^{n+p}$ be an n-dimensional complete self-shrinker with polynomial volume growth, and $\omega\in\R^{n+p}$ is a constant vector, then
\begin{align}
    &\int_M\left(|x|^2-n\right)e^{-\f {|x|^2}2}=0,\label{v-2}\\
    &\int_Mxe^{-\f {|x|^2}2}=0=\int_Mx|x|^2e^{-\f {|x|^2}2},\label{v-3}\\
    &\int_M\left(|x|^4-n(n+2)+2|H|^2\right)e^{-\f {|x|^2}2}=0.\label{v-5}\\
    &\int_M<x,\omega>^2e^{-\f {|x|^2}2}=\int_M|w^T|^2e^{-\f {|x|^2}2}.\label{v-6}
\end{align}
\end{lem}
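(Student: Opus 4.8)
The plan is to apply the self-adjointness of $\mathcal{L}$ (Corollary \ref{cor2-1}) to the geometric functions provided by Lemma \ref{lem-3-2}, checking the integrability hypotheses via the polynomial volume growth assumption. Each identity comes from pairing a carefully chosen test function $u$ with a function $v$ whose $\mathcal{L}v$ we already know, and using $\int_M u(\mathcal{L}v)e^{-|x|^2/2}=-\int_M\langle\nabla u,\nabla v\rangle e^{-|x|^2/2}$; the cleanest choices use $u\equiv 1$, so that the left side vanishes while $\mathcal{L}v$ encodes the geometry.

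For \eqref{v-2}, take $v=\frac12|x|^2$ and $u\equiv 1$: then $\mathcal{L}v=n-|x|^2$ by Lemma \ref{lem-3-2}, and since $u$ is constant $\nabla u=0$, so $\int_M(n-|x|^2)e^{-|x|^2/2}=0$. For the first half of \eqref{v-3}, apply the same with $v=x_i$ (each component), using $\mathcal{L}x_i=-x_i$ to get $\int_M x_i e^{-|x|^2/2}=0$. For the second half, take $v=x_i$ but $u=\frac12|x|^2$; then the left side is $\int_M \frac12|x|^2(-x_i)e^{-|x|^2/2}$ and the right side is $-\int_M\langle x e_i/\,, \nabla(\tfrac12|x|^2)\rangle e^{-|x|^2/2}$ — here I should instead argue more symmetrically: compute $\mathcal{L}(x_i|x|^2)$ using the product rule $\mathcal{L}(fg)=f\mathcal{L}g+g\mathcal{L}f+2\langle\nabla f,\nabla g\rangle$, combine $\mathcal{L}x_i=-x_i$ and $\frac12\mathcal{L}|x|^2=n-|x|^2$ with the cross term $2\langle\nabla x_i,\nabla|x|^2\rangle=4\langle\nabla x_i, x^T\rangle=4x_i^T\cdot$(something), then integrate against $u\equiv1$ and feed back \eqref{v-3} first-half to conclude. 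Identity \eqref{v-5} is the most computational: apply $\mathcal{L}$ to $\frac14|x|^4$, expanding $\mathcal{L}|x|^4=2|x|^2\mathcal{L}|x|^2/\,\cdot+$cross terms; one needs the auxiliary fact $|x^T|^2+|x^\perp|^2=|x|^2$ together with $|x^\perp|^2=|H|^2$ (the self-shrinker equation \eqref{shrinker-1}), and $|\nabla|x|^2|^2=4|x^T|^2$; integrating against $u\equiv1$ and substituting \eqref{v-2} to eliminate the lower-order terms yields the stated constant $n(n+2)-2|H|^2$ under the integral.

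For \eqref{v-6}, fix the constant vector $\omega$ and set $v=\langle x,\omega\rangle$. A direct computation gives $\mathcal{L}\langle x,\omega\rangle=\Delta\langle x,\omega\rangle-\langle x,\nabla\langle x,\omega\rangle\rangle$; since $\langle x,\omega\rangle$ restricted to $M$ has tangential gradient $\omega^T$ and its Laplacian is $\langle H,\omega\rangle=-\langle x^\perp,\omega\rangle$, one finds $\mathcal{L}\langle x,\omega\rangle=-\langle x^\perp,\omega\rangle-\langle x,\omega^T\rangle=-\langle x,\omega\rangle$ (using $\langle x,\omega^T\rangle=\langle x^T,\omega\rangle=|\omega^T|^2$-free bookkeeping, i.e. $\langle x,\omega\rangle=\langle x^T,\omega\rangle+\langle x^\perp,\omega\rangle$). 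Then apply Corollary \ref{cor2-1} with $u=v=\langle x,\omega\rangle$: the right side is $-\int_M|\nabla\langle x,\omega\rangle|^2e^{-|x|^2/2}=-\int_M|\omega^T|^2e^{-|x|^2/2}$, and the left side is $\int_M\langle x,\omega\rangle(-\langle x,\omega\rangle)e^{-|x|^2/2}=-\int_M\langle x,\omega\rangle^2e^{-|x|^2/2}$, giving the claim after canceling the sign.

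The main obstacle, and the only place requiring genuine care, is verifying that the integrability hypotheses of Corollary \ref{cor2-1} are met for the unbounded test functions $v=\langle x,\omega\rangle$, $v=|x|^2$, $v=|x|^4$: one needs $\int_M(|u\nabla v|+|\nabla u||\nabla v|+|u\mathcal{L}v|)e^{-|x|^2/2}<\infty$. This follows from the Gaussian weight together with polynomial volume growth — the weight $e^{-|x|^2/2}$ decays fast enough to absorb any polynomial in $|x|$ (and the second fundamental form appearing in $|H|^2$ is controlled on a self-shrinker of polynomial volume growth by standard estimates, e.g. as in Colding–Minicozzi). Once these finiteness checks are dispatched, the identities drop out as above; the remaining algebra is routine product-rule manipulation with $\mathcal{L}$ plus the substitutions $|x^\perp|^2=|H|^2$ and the already-proved relations \eqref{v-2}, \eqref{v-3}.
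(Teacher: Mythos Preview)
Your approach is correct and is exactly what the paper intends: it does not write out its own proof but refers to Colding--Minicozzi (Lemma~3.25 and Corollary~3.34 of \cite{CM2}), where these same identities are obtained by applying the self-adjointness of $\mathcal{L}$ (Corollary~\ref{cor2-1}) to the functions $1$, $x_i$, $|x|^2$, and $\langle x,\omega\rangle$ just as you describe. One correction to your integrability check: do not appeal to ``standard estimates'' on the second fundamental form to control $|H|^2$---no such general bounds are available---but simply use the self-shrinker relation you already noted, $|H|^2=|x^{\perp}|^2\le |x|^2$, which is polynomial in $|x|$ and, together with the polynomial volume growth hypothesis, immediately gives all the finiteness you need.
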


\begin{corr}\label{coor-v1}
Let $M$ be as in Lemma \ref{lem-v3-5}, then
\begin{eqnarray}
\int_M\left[(|x|^2- n)^2- 2n\right]e^{-\f {|x|^2}2}&=& -2\int_M|H|^2e^{-\f {|x|^2}2}.
\end{eqnarray}
\end{corr}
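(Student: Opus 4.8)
The plan is to reduce the claimed identity to the integral formulas already collected in Lemma~\ref{lem-v3-5}, so the proof is purely computational. First I would expand the integrand: since
$(|x|^2-n)^2-2n=|x|^4-2n|x|^2+n^2-2n$,
the left-hand side becomes the sum of $\int_M|x|^4e^{-\f{|x|^2}2}$, the term $-2n\int_M|x|^2e^{-\f{|x|^2}2}$, and $(n^2-2n)\int_Me^{-\f{|x|^2}2}$.

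Next I would substitute the two relevant identities from Lemma~\ref{lem-v3-5}. Equation~\eqref{v-2} gives $\int_M|x|^2e^{-\f{|x|^2}2}=n\int_Me^{-\f{|x|^2}2}$, and equation~\eqref{v-5} gives $\int_M|x|^4e^{-\f{|x|^2}2}=\int_M\bigl(n(n+2)-2|H|^2\bigr)e^{-\f{|x|^2}2}$. Plugging these in, the coefficient of $\int_Me^{-\f{|x|^2}2}$ collapses to $n(n+2)-2n^2+n^2-2n=0$, and what remains is exactly $-2\int_M|H|^2e^{-\f{|x|^2}2}$, which is the assertion.

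There is essentially no obstacle beyond bookkeeping; the only hypothesis that needs attention is the polynomial volume growth assumed in Lemma~\ref{lem-v3-5}, which (via Corollary~\ref{cor2-1}) guarantees both that the weighted integrals converge and that the self-adjointness manipulations used to derive \eqref{v-2} and \eqref{v-5} are valid. Hence the corollary is immediate once the expansion and substitution are carried out.
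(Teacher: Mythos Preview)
Your proof is correct and is exactly the approach the paper intends: the corollary is stated without an explicit proof, with the remark that it follows easily (as in Colding--Minicozzi) from the identities in Lemma~\ref{lem-v3-5}, and your expansion plus substitution of \eqref{v-2} and \eqref{v-5} carries this out.
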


Now we are in the position to show that $(M,x_0,t_0)$ is the critical point of the $\mathcal{F}$-functional is equivalent to that $M$ is the critical point of $\mathcal{F}$ with fixed $x_0$ and $t_0$.

\begin{thm}\label{thm-1st}
$M$ is a critical point for $\mathcal{F}_{x_0,t_0}$ if and only if $H=-\f{(x-x_0)^{\perp}}{2t_0}$.
\end{thm}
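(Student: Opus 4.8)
The plan is to reduce everything to the first variation formula in Lemma~\ref{lem-v1} and then eliminate the dependence on the auxiliary variations $y$ and $h$ by invoking the integral identities collected in Lemma~\ref{lem-v3-5}. The ``only if'' direction is already essentially contained in the discussion following Lemma~\ref{lem-v1}: choosing $y=0$, $h=0$ and letting $V$ range over all compactly supported normal variations forces the vanishing of $\langle H+(x-x_0)^{\perp}/(2t_0),V\rangle$ after integration against the Gaussian weight, hence $H=-(x-x_0)^{\perp}/(2t_0)$ pointwise. So the real content is the converse.

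For the ``if'' direction, I would argue as follows. After the scaling invariance noted in Section~2, it suffices to treat $x_0=0$, $t_0=1/2$, so that the self-shrinker equation reads $H=-x^{\perp}$ and the first variation formula becomes
\begin{align*}
 \mathcal{F}'_{0,1/2} = (2\pi)^{-n/2}\int_M\Bigl[-\langle H+x^{\perp},V\rangle + h\bigl(|x|^2-n\bigr) + \langle x,y\rangle\Bigr]e^{-|x|^2/2}\,d\mu.
\end{align*}
The first term vanishes because $H+x^{\perp}=0$ by hypothesis. The second term vanishes by \eqref{v-2} of Lemma~\ref{lem-v3-5}, namely $\int_M(|x|^2-n)e^{-|x|^2/2}=0$. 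The third term vanishes by the first identity in \eqref{v-3}, $\int_M x\,e^{-|x|^2/2}=0$ (applied componentwise, pairing against the constant vector $y$). Hence $\mathcal{F}'_{0,1/2}=0$ for every normal variation $V$ and every choice of $y\in\R^{n+p}$ and $h\in\R$, which is exactly the statement that $M$ is a critical point of $\mathcal{F}_{0,1/2}$ in the full sense. Undoing the rescaling gives the general $(x_0,t_0)$ case.

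The main subtlety — and where I would be careful — is the integrability: Lemma~\ref{lem-v3-5} and Corollary~\ref{cor2-1} require polynomial volume growth (or at least enough decay to justify integration by parts with the operator $\mathcal{L}$), so the clean statement of Theorem~\ref{thm-1st} implicitly needs $M$ to have polynomial volume growth, or else one restricts to the closed case or to variations for which the relevant integrals converge. For closed $M$ there is nothing to check. In the noncompact case the identities \eqref{v-2} and \eqref{v-3} are precisely the outputs of applying $\mathcal{L}$ to $|x|^2$ and to the coordinate functions $x_i$ (Lemma~\ref{lem-3-2}) and integrating via Corollary~\ref{cor2-1}, so the argument goes through verbatim under that hypothesis. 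Thus the only real obstacle is bookkeeping the hypotheses, not any genuine analytic difficulty: the theorem is a direct consequence of Lemma~\ref{lem-v1} together with Lemma~\ref{lem-v3-5}.
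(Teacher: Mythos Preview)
Your proof is correct and follows essentially the same approach as the paper: reduce to $x_0=0$, $t_0=\tfrac12$, read off the ``only if'' direction from Lemma~\ref{lem-v1} with $y=0$, $h=0$, and for the ``if'' direction kill the remaining two terms using identities \eqref{v-2} and \eqref{v-3} of Lemma~\ref{lem-v3-5}. Your additional remark about needing polynomial volume growth (or closedness) to justify those identities is a valid observation that the paper leaves implicit.
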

\begin{proof}
Without loss of generality, we only show this for $x_0=0$ and $t_0=\f 12$. When $x_0=0$ and $t_0=\f 12$, the first variation formula \eqref{v-1} becomes
\begin{align}
    \mathcal{F}'_{0,\f 12}=&(2\pi)^{-\f n2}\int_M\biggl[-<H+x^{\perp},V>\nonumber\\
    &\qquad+h(|x|^2-n)+<x,y>\biggr]e^{-\f {|x|^2}2}d\mu.\label{v-4}
\end{align}
If $M$ is a critical point for $\mathcal{F}_{0,\f 12}$, then it is obvious that $M$ satisfies $H=-x^{\perp}$. Conversely, if $M$ satisfies $H=-x^{\perp}$, then equations \eqref{v-2} and \eqref{v-3} imply the last two terms in \eqref{v-4} vanish for every $h$ and every $y$. Therefore $M$ is a critical point of $\mathcal{F}_{0,\f 12}$.
\end{proof}

The equations \eqref{v-5}, \eqref{v-6} and the Corollary \ref{coor-v1} will be used in the next section when we compute the second variation of the functional $\mathcal{F}_{x_0,t_0}$ at a critical point.

\section{The second variation of $\mathcal{F}$-functional}

In this section, we calculate the second variation formula for the functional $\mathcal{F}_{x_0,t_0}$ when $M$ is a critical point.
\begin{thm}
Suppose $M$ is a critical point of the functional $\mathcal{F}_{x_0,t_0}$, and $M$ is complete, with polynomial volume growth. If $M_s\subset\R^{n+p}$ is a normal variation of $M$, $x_s$,$t_s$ are variations of $x_0$ and $t_0$ with
 \begin{equation*}
    M_0'=V,\qquad x_0'=y,\qquad t_0'=h,
 \end{equation*}
then for $x_0=0$ and $t_0=\f 12$, we have the second variation formula
\begin{align}
    \mathcal{F}''=&(2\pi)^{-\f n2}\int_M\biggl[|\nabla^{\perp}V|^2-\sum_{\alpha,\beta}\sigma_{\alpha\beta}V^{\alpha}V^{\beta}-|V|^2\nonumber\\
    &\qquad -2h^2|H|^2-4h<H,V>-2<y,V>-|y^{\perp}|^2\biggr]e^{-\f{|x|^2}2}d\mu.\label{vv-3}
\end{align}
\end{thm}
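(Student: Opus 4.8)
The plan is to compute $\mathcal{F}'' = \frac{d^2}{ds^2}\big|_{s=0}\mathcal{F}_{x_s,t_s}(M_s)$ directly from the formula $\mathcal{F}_{x_s,t_s}(M_s) = \int_M I(s)J(x,s)\,d\mu_0$, using the product rule twice:
\begin{equation*}
    \mathcal{F}'' = \int_M\left(I''(0)J(x,0) + 2I'(0)J'(x,0) + I(0)J''(x,0)\right)d\mu_0,
\end{equation*}
and then substituting the expansions \eqref{j-1}, \eqref{j-2}, \eqref{I-1}, \eqref{I-2} with $x_0=0$, $t_0=\f12$, so that $I(0) = (2\pi)^{-n/2}e^{-|x|^2/2}$, and using the self-shrinker equation $H=-x^{\perp}$. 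First I would set $x_0=0$, $t_0=\f12$ throughout and simplify the variational data: with these choices $\f{x-x_0}{2t_0}=x$, $\f{|x-x_0|^2}{4t_0^2}-\f n{2t_0}=|x|^2-n$, and the derivatives $(\f{x-x_0}{2t_0})' = -y$, $(V-y)' = \bar\nabla_V V - y'$ (the normal part contributing through $\nabla^\perp V$-type terms and the ambient derivative), $(\f{|x-x_0|^2}{4t_0^2}-\f n{2t_0})' = \langle -2y, \text{stuff}\rangle + h(\cdots)$. I would collect the terms into those not involving $h,y$ (these must reproduce the pure normal-variation second variation: $|\nabla^\perp V|^2 - \sum\sigma_{\alpha\beta}V^\alpha V^\beta + \langle V,H\rangle^2 + \mathrm{div}(\bar\nabla_V V)^T - \langle\bar\nabla_V V,H\rangle$ from $J''$, plus $-2\langle V,H\rangle\langle x,V\rangle$ from the cross term $2I'J'$, plus $\langle x,V\rangle^2$ from $I''$), the terms linear and quadratic in $h$, the terms linear and quadratic in $y$, and the mixed $h$-$y$ terms.

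The main simplification — and the step I expect to be the real obstacle — is showing that after integrating against $e^{-|x|^2/2}$, the "nice" combination collapses to $|\nabla^\perp V|^2 - \sum\sigma_{\alpha\beta}V^\alpha V^\beta - |V|^2$, and that the messy terms reorganize into $-2h^2|H|^2 - 4h\langle H,V\rangle - 2\langle y,V\rangle - |y^\perp|^2$. For the pure-$V$ part: the divergence term $\mathrm{div}(\bar\nabla_V V)^T$ integrates against the weight to produce, via Lemma \ref{lem2-1} (integration by parts with the weight, i.e. the identity $\int_M \mathrm{div}(W)e^{-|x|^2/2} = \int_M \langle W, x^T\rangle e^{-|x|^2/2}$), a term $\int\langle \bar\nabla_V V, x^T\rangle e^{-|x|^2/2}$ which combines with $-\langle\bar\nabla_V V, H\rangle = \langle\bar\nabla_V V, x^\perp\rangle$ to give $\int\langle\bar\nabla_V V, x\rangle e^{-|x|^2/2}$; then one rewrites $\langle \bar\nabla_V V, x\rangle = V\langle V,x\rangle - \langle V, \bar\nabla_V x\rangle = V\langle V,x\rangle - |V|^2$ (since $\bar\nabla_V x = V$ as $x$ is the position vector), and the term $\int (V\langle V,x\rangle) e^{-|x|^2/2}$ can be integrated by parts once more — recognizing it as essentially $\int \langle V, \nabla^\perp_V(\text{something})\rangle$ or handling it via $V\langle V,x\rangle$ being the derivative of $\langle V,x\rangle$ along $V$ — to cancel against the remaining pieces $\langle x,V\rangle^2 - 2\langle H,V\rangle\langle x,V\rangle + \langle V,H\rangle^2 = \langle x^\perp + H + \cdots\rangle$; but since $H = -x^\perp$ we have $\langle x,V\rangle = \langle x^\perp, V\rangle = -\langle H,V\rangle$ (as $V$ is normal), so $\langle x,V\rangle^2 - 2\langle H,V\rangle\langle x,V\rangle + \langle H,V\rangle^2 = \langle H,V\rangle^2 + 2\langle H,V\rangle^2 + \langle H,V\rangle^2$... this must be checked carefully, but the upshot is that these algebraic terms combine with the integration-by-parts contributions to leave only $-|V|^2$. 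This careful bookkeeping, and in particular justifying the integrations by parts using Corollary \ref{cor2-1} given the polynomial volume growth hypothesis, is where the work concentrates.

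For the $h$- and $y$-dependent terms I would argue as follows. The terms linear in $h$ or $y$ that do \emph{not} multiply $V$ must integrate to zero: those are multiples of $\int(|x|^2-n)e^{-|x|^2/2}$, $\int\langle x,y\rangle e^{-|x|^2/2}$, $\int\langle x,y\rangle(|x|^2-n)e^{-|x|^2/2}$, which vanish by \eqref{v-2} and \eqref{v-3} in Lemma \ref{lem-v3-5}; likewise $h'$ and $y'$ terms drop since one is free to take $h,y$ affine (constant derivative) so $h'=y'=0$, or they vanish by the same identities. The genuinely surviving $h$-quadratic term comes from $h^2(|x|^2-n)^2$ in $I''$; by Corollary \ref{coor-v1}, $\int(|x|^2-n)^2 e^{-|x|^2/2} = \int(2n - 2|H|^2)e^{-|x|^2/2}$, and the constant $2n$ part cancels against the $-\f n{2t_0}$-type and normalization contributions, leaving $-2h^2|H|^2$. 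The $y$-quadratic term $\int\langle x,y\rangle^2 e^{-|x|^2/2}$ equals $\int|y^T|^2 e^{-|x|^2/2}$ by \eqref{v-6}, and combined with a $-\int|y|^2 e^{-|x|^2/2}$ coming from the normalization/$J$ expansion (since $\int e^{-|x|^2/2}$ is the measure of $M$ and differentiating the constant $\mathcal F$-value in the $x_0,t_0$ directions along a self-shrinker produces $-|y|^2 = -|y^T|^2 - |y^\perp|^2$), yields $-|y^\perp|^2$. The cross term $2I'J'$ contributes $-2\langle H,V\rangle\cdot(-2h)\cdot$, wait — more precisely the pieces $-2\langle V,H\rangle\big(h(|x|^2-n) - \langle x,V\rangle + \langle x,y\rangle\big)$; using $H=-x^\perp$ and $\langle x,V\rangle = -\langle H,V\rangle$, after integration and using \eqref{v-5} to evaluate $\int(|x|^2-n)\langle H,V\rangle$-type expressions against the weight (here one needs $\mathcal{L}$ applied to $\langle H,V\rangle$ or the Simons-type identity, or simply that $\int \langle H, V\rangle(|x|^2 - n) e^{-|x|^2/2}$ reduces appropriately), one extracts precisely $-4h\langle H,V\rangle$ and $-2\langle y,V\rangle$. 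Assembling all surviving terms gives \eqref{vv-3}. The principal obstacle remains the pure-$V$ reduction to $-|V|^2$: it requires two successive weighted integrations by parts and the algebraic identity forced by $H=-x^\perp$ together with $V\perp TM$, and getting every sign and coefficient right there is the crux of the proof.
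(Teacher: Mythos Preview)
Your overall strategy is the same as the paper's --- write $\mathcal{F}''=\int_M(I''J+2I'J'+IJ'')\,d\mu$, expand using \eqref{j-1}--\eqref{I-2}, substitute $H=-x^{\perp}$, and then reduce with the integral identities of Lemma~\ref{lem-v3-5} and Corollary~\ref{coor-v1}. So the proposal is correct in outline.

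However, you have misidentified the ``principal obstacle''. The reduction of the pure-$V$ part is \emph{not} where the difficulty lies, and your plan to integrate by parts twice is an unnecessary detour. The paper handles the $\bar\nabla_V V$ terms in one stroke: after integrating $\mathrm{div}(\bar\nabla_V V)^T$ against the weight you obtain $\int_M\langle\bar\nabla_V V,\,x^T\rangle e^{-|x|^2/2}$, and combining with $-\langle\bar\nabla_V V,H\rangle=\langle\bar\nabla_V V,x^{\perp}\rangle$ gives $\int_M\langle\bar\nabla_V V,x\rangle e^{-|x|^2/2}$. This cancels \emph{exactly} against the term $-\langle\f{x-x_0}{2t_0},(V-y)'\rangle$ in \eqref{I-2} (the $(V)'$ part is the acceleration $\bar\nabla_V V$, and the $y'$ part integrates to zero by \eqref{v-3}). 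No $-|V|^2$ arises here at all, and there is no second integration by parts.

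The $-|V|^2$ instead comes from a different line of \eqref{I-2}: the term $-\langle(\f{x-x_0}{2t_0})',V-y\rangle$. Since $(\f{x-x_0}{2t_0})'=\f{V-y}{2t_0}-h\f{x-x_0}{2t_0^2}$, at $t_0=\f12$ this contributes $-|V-y|^2+2h\langle x,V-y\rangle$, and expanding $-|V-y|^2=-|V|^2+2\langle V,y\rangle-|y|^2$ is where $-|V|^2$ (and the $y$-linear and $y$-quadratic pieces) actually enter. Likewise, you are vague about where the term ``$-|y|^2$'' comes from; it is from this same expansion, not from ``the normalization/$J$ expansion''. After this point, only the algebraic identity $\langle x,V\rangle=-\langle H,V\rangle$ (since $V$ is normal) and the integral identities \eqref{v-2}--\eqref{v-6} and Corollary~\ref{coor-v1} are needed to collapse the remaining terms; in particular all $\langle H,V\rangle^2$, $\langle H,V\rangle\langle x,y\rangle$, and $h\langle H,V\rangle(|x|^2-n)$ contributions from the square in $I''$, from $2I'J'$, and from $\langle V,H\rangle^2$ in $J''$ cancel algebraically \emph{before} integrating. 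The bookkeeping is straightforward once you track the right source for each surviving term.
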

\begin{rem}
For normal vector field $V=\sum\limits_{\alpha}V^{\alpha}e_{\alpha}$, if we define the operator $L$ by
\begin{equation}\label{ope-vv}
    LV^{\alpha}=\Delta^{\perp}V^{\alpha}-<x,e_k>V_{,k}^{\alpha}+\sum_{\alpha\beta}\sigma_{\alpha\beta}V^{\beta}+V^{\alpha},
\end{equation}
where $\Delta^{\perp}V^{\alpha}$ and $V_{,k}^{\alpha}$ denote the component of the Laplacian and first order covariant derivative of the cross-section $V$ of the normal bundle $NM$ (cf. \cite{CL1,CL2,Li,LW}). Then the second variational formula \eqref{vv-3} can be rewritten as
\begin{align}
    \mathcal{F}''=&(2\pi)^{-\f n2}\int_M\biggl[-\sum_{\alpha}V^{\alpha}LV^{\alpha}-2h^2|H|^2\nonumber\\
    &\qquad -4h<H,V>-2<y,V>-|y^{\perp}|^2\biggr]e^{-\f{|x|^2}2}d\mu.\tag{\ref{vv-3}$'$}\label{vv-3'}
\end{align}

\end{rem}

\begin{proof}[Proof of Theorem 4.1]
Since $M$ is a critical point of $\mathcal{F}_{x_0,t_0}$, it follows from the first variation formula that
\begin{align*}
    &H+\f {(x-x_0)^{\perp}}{2t_0}=0,\\
    &\int_M\left(\f{|x-x_0|^2}{4t_0^2}-\f n{2t_0}\right)e^{-\f{|x-x_0|^2}{4t_0}}d\mu=0\\
    &\int_M\f {x-x_0}{2t_0}e^{-\f{|x-x_0|^2}{4t_0}}d\mu=0.
\end{align*}
Then from the equations \eqref{j-1} - \eqref{I-2}, we have the second variation of $\mathcal{F}_{x_0,t_0}$ at the critical point $M$ as following:
\begin{align}
\mathcal{F}''&=\int_M(I''J+2I'J'+IJ'')d\mu\nonumber\\
&=(4\pi t_0)^{-\f n2}\int_M\left[|\nabla^{\perp}V|^2-\sum_{\alpha,\beta}\sigma_{\alpha\beta}V^{\alpha}V^{\beta}+\textrm{div } (\bar{\nabla}_VV)^T-<\bar{\nabla}_VV,H>\right.\nonumber\\
&\qquad+ h\left(\f{|x-x_0|^2}{4t_0^2}-\f n{2t_0}\right)'-<\left(\f {x-x_0}{2t_0}\right)',V-y>-<\f {x-x_0}{2t_0},\bar{\nabla}_VV>\label{vv-1}\\
&\qquad\left.+\left(h\left(\f{|x-x_0|^2}{4t_0^2}-\f n{2t_0}\right)+<\f {x-x_0}{2t_0},y>\right)^2\right]e^{-\f{|x-x_0|^2}{4t_0}}d\mu.\nonumber
\end{align}
The third and fourth integrals on the right hand side of \eqref{vv-1} can be rewritten as
\begin{align*}
    &~(4\pi t_0)^{-\f n2}\int_M\left(\textrm{div } (\bar{\nabla}_VV)^T-<\bar{\nabla}_VV,H>\right)e^{-\f{|x-x_0|^2}{4t_0}}d\mu\\
    =&~(4\pi t_0)^{-\f n2}\int_M\left(\sum_{i}\nabla_{e_i}<\bar{\nabla}_VV,e_i>+<\bar{\nabla}_VV,\f {(x-x_0)^{\perp}}{2t_0}>\right)e^{-\f{|x-x_0|^2}{4t_0}}d\mu\\
    =&~(4\pi t_0)^{-\f n2}\int_M\left(<\bar{\nabla}_VV,\f {(x-x_0)^T}{2t_0}>+<\bar{\nabla}_VV,\f {(x-x_0)^{\perp}}{2t_0}>\right)e^{-\f{|x-x_0|^2}{4t_0}}d\mu\\
    =&~(4\pi t_0)^{-\f n2}\int_M<\bar{\nabla}_VV,\f {x-x_0}{2t_0}>d\mu,
\end{align*}
and can be canceled with the seventh integral of \eqref{vv-1}. The fifth term is given by
\begin{align}
    \left(\f{|x-x_0|^2}{4t_0^2}-\f n{2t_0}\right)'&=\f {<x-x_0,V-y>}{2t_0^2}-h\left(\f {|x-x_0|^2-nt_0}{2t_0^3}\right).
\end{align}
For the sixth term, we have
\begin{align}
    \left(\f {x-x_0}{2t_0}\right)'=\f {V-y}{2t_0}-h\f {x-x_0}{2t_0^2}
\end{align}
Then for $x_0=0$ and $t_0=\f 12$, the second variation formula \eqref{vv-1} becomes
\begin{align*}
    \mathcal{F}''&=(2\pi)^{-\f n2}\int_M \biggl( |\nabla^{\perp}V|^2-\sum_{\alpha,\beta}\sigma_{\alpha\beta}V^{\alpha}V^{\beta}\\
    &\qquad\quad +4h<x,V-y>-4h^2(|x|^2-\f n2)-|V-y|^2\\
    &\qquad\quad+h^2(|x|^2-n)^2+<x,y>^2+2h\left(|x|^2-n\right)<x,y>\biggr) e^{-\f{|x|^2}2}d\mu.
\end{align*}
Using the equations \eqref{v-2} - \eqref{v-6}, and Corollary \ref{coor-v1}, we get
 \begin{align*}
     \mathcal{F}''&=(2\pi)^{-\f n2}\int_M  \biggl[ |\nabla^{\perp}V|^2-\sum_{\alpha,\beta}\sigma_{\alpha\beta}V^{\alpha}V^{\beta}-|V|^2\\
    &\qquad -2h^2|H|^2-4h<H,V>-2<y,V>-|y^{\perp}|^2\biggr]e^{-\f{|x|^2}2}d\mu.
\end{align*}
\end{proof}

When $p=1$, that is for hypersurface case, we have the following immediate corollary,
\begin{corr}[Theorem 4.14 in \cite{CM2}]
Suppose $M$ is a critical point of the functional $\mathcal{F}_{x_0,t_0}$, and $M$ is complete, with polynomial volume growth. If $M_s\subset\R^{n+1}$ is a normal variation of $M$, $x_s$,$t_s$ are variations of $x_0$ and $t_0$ with
 \begin{equation*}
    M_0'=fe_{n+1},\qquad x_0'=y,\qquad t_0'=h,
 \end{equation*}
then for $x_0=0$ and $t_0=\f 12$, we have the second variation formula
\begin{align}
    \mathcal{F}''=&(2\pi)^{-\f n2}\int_M\biggl[-fLf-2h^2|H|^2-4fh|H|\nonumber\\
    &\qquad -2f<y,e_{n+1}>-|y^{\perp}|^2\biggr]e^{-\f{|x|^2}2}d\mu,\label{stab-2}
\end{align}
where $L$ is the stability operator defined as
\begin{equation}
    Lf=\Delta f-<x,\nabla f>+|A|^2f+f.
\end{equation}
\end{corr}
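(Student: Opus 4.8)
The plan is to derive the stated hypersurface formula as a direct specialization of the general second variation formula of Theorem 4.1, in the form \eqref{vv-3'}, to codimension $p=1$. When $p=1$ the normal bundle $NM$ is a real line bundle; fixing the unit normal $e_{n+1}$, every normal variation field is of the form $V=fe_{n+1}$ for some $f\in C^1(M)$, and essentially the whole proof consists of substituting this into \eqref{vv-3'} term by term.

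First I would record the reduction of the normal-bundle quantities to their scalar analogues. Since $e_{n+1}$ is a unit section, $\langle\nabla^{\perp}_{e_i}e_{n+1},e_{n+1}\rangle=\f12 e_i|e_{n+1}|^2=0$, so $e_{n+1}$ is parallel for the normal connection; hence $\nabla^{\perp}V=(\nabla f)e_{n+1}$, $\Delta^{\perp}V=(\Delta f)e_{n+1}$ and $V^{n+1}_{,k}=f_{,k}$. Also $\sigma_{n+1,n+1}=\sum_{i,j}(h^{n+1}_{ij})^2=|A|^2$. Plugging these into the operator \eqref{ope-vv} yields $L(fe_{n+1})=(Lf)e_{n+1}$, where now $Lf=\Delta f-\langle x,\nabla f\rangle+|A|^2f+f$ is exactly Colding--Minicozzi's scalar stability operator, so that $-\sum_{\alpha}V^{\alpha}LV^{\alpha}=-f\,Lf$. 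For the remaining terms of \eqref{vv-3'} I would write $H=\langle H,e_{n+1}\rangle e_{n+1}$ and denote the (oriented) scalar mean curvature $\langle H,e_{n+1}\rangle$ by $|H|$; then $\langle H,V\rangle=|H|f$ and $\langle y,V\rangle=f\langle y,e_{n+1}\rangle$, while $h^2|H|^2$ and $|y^{\perp}|^2$ are already scalar. Assembling these gives precisely \eqref{stab-2}. (Equivalently, one could substitute $V=fe_{n+1}$ directly into \eqref{vv-3}, using $|\nabla^{\perp}V|^2=|\nabla f|^2$, $\sum_{\alpha,\beta}\sigma_{\alpha\beta}V^{\alpha}V^{\beta}=|A|^2f^2$, $|V|^2=f^2$, and then integrating by parts by Lemma \ref{lem2-1}--Corollary \ref{cor2-1}, valid under the polynomial volume growth hypothesis, to identify $\int_M(|\nabla f|^2-|A|^2f^2-f^2)e^{-\f{|x|^2}2}$ with $-\int_M f\,Lf\,e^{-\f{|x|^2}2}$.)

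There is no serious obstacle here: the content is entirely codimension-one bookkeeping, and the hard part is merely keeping the identifications straight. The one step that genuinely needs a word of justification rather than being a formal substitution is the identity $\Delta^{\perp}(fe_{n+1})=(\Delta f)e_{n+1}$, i.e.\ that the normal Laplacian on the line bundle reduces to the ordinary Laplacian on $f$; this is exactly where triviality and flatness of $NM$ (via $\nabla^{\perp}e_{n+1}=0$) are used. A minor secondary point is the sign convention allowing $\langle H,e_{n+1}\rangle$ to be written $|H|$; if $H$ vanishes or changes sign one simply retains the signed scalar mean curvature throughout, which affects none of the quadratic terms.
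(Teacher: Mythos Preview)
Your proposal is correct and follows exactly the paper's approach: the paper states this result as an ``immediate corollary'' of Theorem~4.1 (equivalently \eqref{vv-3'}) in the case $p=1$, with no further proof. Your term-by-term specialization of \eqref{vv-3'} using $V=fe_{n+1}$, $\nabla^{\perp}e_{n+1}=0$, and $\sigma_{n+1,n+1}=|A|^2$ is precisely the bookkeeping the paper leaves to the reader.
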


\section{$\mathcal{F}$-stability and eigenvalues of $\mathcal{I}$}

In this section, we give the definition of $\mathcal{F}$-stability, then consider two eigenvector fields corresponding to the bilinear symmetric form $\mathcal{I}$ on the cross-sections in $NM$ and give a characterization of $\mathcal{F}$-stability in terms of the eigenvalues of $\mathcal{I}$.
\begin{defn}\label{defn3-1}
A critical point $M$ for $\mathcal{F}_{x_0,t_0}$ is $\mathcal{F}$-stable if for every compactly supported normal variation $V$ of $M$, there exist variations $x_s$ of $x_0$ and $t_s$ of $t_0$ that make $\mathcal{F}''\geq 0$ at $s=0$.
\end{defn}
In the remaining of this paper, without loss of generality, we only consider the case $x_0=0, t_0=\f 12$. The self-shrinker equation \eqref{shrinker} is equivalent to
\begin{equation}
    H=-x,
\end{equation}
or in terms of the components
\begin{equation}
    H^{\alpha}=-<x,e_{\alpha}>, \quad n+1\leq \alpha\leq n+p.
\end{equation}
The first and second covariant derivatives of $H$ have the following components (cf. \cite{CL2,LW}):
\begin{align}
    &H^{\alpha}_{,i}=\sum_jh^{\alpha}_{ij}<x,e_j>,\label{eq2-2}\\
    &H^{\alpha}_{,ij}=\sum_{k}h^{\alpha}_{ijk}<x,e_k>+h^{\alpha}_{ij}-\sum_{\beta}\sigma_{\alpha\beta}h^{\beta}_{ij},\\
    &\Delta^{\perp}H^{\alpha}=\sum_k<x,e_k>H^{\alpha}_{,k}+H^{\alpha}-\sum_{\beta}\sigma_{\alpha\beta}H^{\beta}.\label{eq2-4}
\end{align}

Let $x:M^n\ra\R^{n+p}$ be a closed self-shrinker, $V,W\in NM$ be two arbitrary normal variation vector fields. We set
\begin{align*}
\mathcal{I}(V,W)=&\int_M\biggl(<\nabla^{\perp}V,\nabla^{\perp}W>^2-\sum_{\alpha\beta}\sigma_{\alpha\beta}V^{\alpha}W^{\beta}-<V,W>\biggr)e^{-\f 12|x|^2}d\mu\\
=&-\int_M\sum_{\alpha}V^{\alpha}LW^{\alpha}e^{-\f 12|x|^2}d\mu.
\end{align*}
From the standard facts about the elliptic differential operator, we see that $\mathcal{I}$ is a symmetric bilinear form on the space of cross-sections in $NM$, which may be diagonalized with respect to the weighted $L^2$ inner product
\begin{align*}
    <V,W>_w=&\int_M<V,W>e^{-\f 12|x|^2}d\mu,
\end{align*}
and has distinct real eigenvalues $\{\mu_i\}$ such that
\begin{align*}
    \mu_1<\mu_2\leq \mu_3\leq \cdots \ra+\infty.
\end{align*}
Moreover, the dimension of each eigenspace is finite. In the sequel, we denote $\mathcal{W}_{\mu_i}$ the eigenspace corresponding to the eigenvalue $\mu_i$.
\begin{prop}\label{prop-5-1}
Let $x:M^n\ra\R^{n+p}$ be a closed self-shrinker. Then the mean curvature vector $H=\sum\limits_{\alpha}H^{\alpha}e_{\alpha}$ and the normal part $V^{\perp}=\sum\limits_{\alpha}<V,e_{\alpha}>e_{\alpha}$ of a constant vector field $V$  satisfies
\begin{equation}
    LH^{\alpha}=2H^{\alpha},\quad LV^{\alpha}=V^{\alpha},\quad n+1\leq \alpha\leq n+p,
\end{equation}
where the operator $L$ defined in \eqref{ope-vv}.  Therefore $H$ and $V^{\perp}$ are eigenvector fields of $\mathcal{I}$, and $H\in \mathcal{W}_{-2}$, $V^{\perp}\in \mathcal{W}_{-1}$.
\end{prop}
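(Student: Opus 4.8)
The plan is to verify the two identities $LH^{\alpha}=2H^{\alpha}$ and $L(V^{\perp})^{\alpha}=(V^{\perp})^{\alpha}$ directly from the definition \eqref{ope-vv} of $L$, using only the structure equations \eqref{eq2-2} and \eqref{eq2-4} together with the Gauss, Weingarten and Codazzi formulas; the statement that $H$ and $V^{\perp}$ are then eigenvector fields of $\mathcal{I}$ is immediate from the rewriting $\mathcal{I}(V,W)=-\int_M\sum_{\alpha}V^{\alpha}LW^{\alpha}e^{-\f12|x|^2}d\mu$ recorded just before the proposition.

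For the mean curvature vector the computation is short. Insert \eqref{eq2-4}, namely $\Delta^{\perp}H^{\alpha}=\sum_k<x,e_k>H^{\alpha}_{,k}+H^{\alpha}-\sum_{\beta}\sigma_{\alpha\beta}H^{\beta}$, into $LH^{\alpha}=\Delta^{\perp}H^{\alpha}-<x,e_k>H^{\alpha}_{,k}+\sum_{\beta}\sigma_{\alpha\beta}H^{\beta}+H^{\alpha}$: the drift term $\sum_k<x,e_k>H^{\alpha}_{,k}$ cancels the second term, the curvature term $-\sum_{\beta}\sigma_{\alpha\beta}H^{\beta}$ cancels $\sum_{\beta}\sigma_{\alpha\beta}H^{\beta}$, and two copies of $H^{\alpha}$ survive, so $LH^{\alpha}=2H^{\alpha}$.

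For $V^{\perp}$ I would work with the orthogonal decomposition $V=V^{T}+V^{\perp}$ of the fixed constant vector, and set $a_i=<V,e_i>$, $V^{\alpha}=<V,e_{\alpha}>$. Since $\bar\nabla_{e_j}V=0$, splitting this into tangential and normal components via the Gauss and Weingarten equations yields the coupled relations
\[
a_{i,j}=\sum_{\alpha}h^{\alpha}_{ij}V^{\alpha},\qquad
V^{\alpha}_{,j}=-\sum_{i}h^{\alpha}_{ij}a_i .
\]
Differentiating the second relation covariantly once more and tracing to form $\Delta^{\perp}V^{\alpha}=\sum_jV^{\alpha}_{,jj}$ produces two terms. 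The first, $-\sum_{i,j}h^{\alpha}_{ij}a_{i,j}$, becomes $-\sum_{\beta}\sigma_{\alpha\beta}V^{\beta}$ after inserting the first relation. The second, $-\sum_{i,j}h^{\alpha}_{ij,j}a_i$, equals $-\sum_i H^{\alpha}_{,i}a_i$ by the Codazzi equation ($\sum_jh^{\alpha}_{ij,j}=H^{\alpha}_{,i}$), and then \eqref{eq2-2} rewrites this as $\sum_j<x,e_j>\bigl(-\sum_ih^{\alpha}_{ij}a_i\bigr)=\sum_j<x,e_j>V^{\alpha}_{,j}$. Hence $\Delta^{\perp}V^{\alpha}=-\sum_{\beta}\sigma_{\alpha\beta}V^{\beta}+\sum_j<x,e_j>V^{\alpha}_{,j}$, and substituting into \eqref{ope-vv} cancels everything but $V^{\alpha}$, giving $L(V^{\perp})^{\alpha}=(V^{\perp})^{\alpha}$.

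Finally, the two identities say exactly that $H$ and $V^{\perp}$ lie in the $(-2)$- and $(-1)$-eigenspaces of $\mathcal{I}$, respectively (the sign flip is the $-L$ in $\mathcal{I}(V,W)=-\int_M\sum_{\alpha}V^{\alpha}LW^{\alpha}e^{-\f12|x|^2}d\mu$); these are genuine nonzero eigenvector fields because a closed self-shrinker in $\R^{n+p}$ has $H\not\equiv0$ (there is no closed minimal submanifold of Euclidean space) and $V^{\perp}\not\equiv0$ for a suitably chosen constant $V$. The only delicate point — and the main, though still essentially routine, obstacle — is the second covariant-derivative computation for $V^{\perp}$: one must keep the tangential/normal splitting straight through two applications of the Gauss–Weingarten equations and apply Codazzi in the correct index slot, otherwise extraneous curvature terms appear.
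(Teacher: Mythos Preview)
Your proof is correct and follows essentially the same route as the paper's: both insert \eqref{eq2-4} into the definition of $L$ to get $LH^{\alpha}=2H^{\alpha}$, and both compute $V^{\alpha}_{,i}=-\sum_jh^{\alpha}_{ij}\langle V,e_j\rangle$, then $\Delta^{\perp}V^{\alpha}=-\sum_jH^{\alpha}_{,j}\langle V,e_j\rangle-\sum_{\beta}\sigma_{\alpha\beta}V^{\beta}$ and use \eqref{eq2-2} to rewrite the first term as the drift term. The paper is simply terser (it does not name Codazzi or introduce the auxiliary $a_i$), and it omits your closing remark on the nonvanishing of $H$ and $V^{\perp}$.
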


\begin{proof}
By the definition of $L$, the equation \eqref{eq2-4} implies
\begin{equation}
    LH^{\alpha}=2H^{\alpha},\qquad n+1\leq \alpha\leq n+p.
\end{equation}
For a constant vector field $V\in\R^{n+p}$, set $V^{\alpha}=<V,e_{\alpha}>$. We have
\begin{align*}
    V^{\alpha}_{,i}=&-h^{\alpha}_{ij}<V,e_j>,\\
    \Delta^{\perp}V^{\alpha}=&-H^{\alpha}_{,j}<V,e_j>-\sum_{\alpha\beta}\sigma_{\alpha\beta}<V,e_{\beta}>\\
    =&-\sum_k<x,e_k>h^{\alpha}_{jk}<V,e_j>-\sum_{\alpha\beta}\sigma_{\alpha\beta}V^{\beta}\\
    =&\sum_k<x,e_k>V^{\alpha}_{,k}-\sum_{\alpha\beta}\sigma_{\alpha\beta}V^{\beta}.
\end{align*}
It follows that
\begin{equation}
    LV^{\alpha}=V^{\alpha}.
\end{equation}
Therefore
\begin{align*}
    \mathcal{I}(H,H)=&-2<H,H>_w,\\
    \mathcal{I}(V^{\perp},V^{\perp})=&-<V^{\perp},V^{\perp}>_w.
\end{align*}
i.e., $H\in \mathcal{W}_{-2}$, $V^{\perp}\in \mathcal{W}_{-1}$.
\end{proof}

Now we derive the following necessary condition for closed $\mathcal{F}$-stable self-shrinkers:

\begin{prop}
Suppose $x:M^n\ra\R^{n+p}$ is a closed $\mathcal{F}$-stable self-shrinker, then $\{-2,-1\}$ are the only negative eigenvalues of the bilinear symmetric form $\mathcal{I}$.
\end{prop}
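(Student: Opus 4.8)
The plan is to show that if $\mu < 0$ is an eigenvalue of $\mathcal{I}$ other than $-2$ or $-1$, then some normal variation field in $\mathcal{W}_\mu$ cannot be compensated by any choice of $y$ and $h$, contradicting $\mathcal{F}$-stability. The key observation is that the two ``extra'' terms in the second variation formula \eqref{vv-3'} — namely those involving $y$ and $h$ — only couple to the $\mathcal{W}_{-2}$ and $\mathcal{W}_{-1}$ parts of $V$. Indeed, by Proposition \ref{prop-5-1}, $H \in \mathcal{W}_{-2}$ and $V_0^\perp \in \mathcal{W}_{-1}$ for any constant vector $V_0 \in \R^{n+p}$; so $\langle H, V\rangle$ picks out only the component of $V$ along $\mathcal{W}_{-2}$, and $\langle y, V\rangle = \langle y^\perp, V\rangle$ picks out only the component of $V$ along $\mathcal{W}_{-1}$ (using that $y^\perp$ ranges over the span of the $\mathcal{W}_{-1}$-fields $e_\alpha^\perp$).

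First I would take a nonzero $V \in \mathcal{W}_\mu$ with $\mu \notin \{-2,-1\}$ (and $\mu < 0$). Then $V$ is $\langle\cdot,\cdot\rangle_w$-orthogonal to both $\mathcal{W}_{-2}$ and $\mathcal{W}_{-1}$, hence $\int_M \langle H, V\rangle e^{-|x|^2/2} d\mu = 0$ and $\int_M \langle y, V\rangle e^{-|x|^2/2} d\mu = 0$ for every constant $y$ (the second because $\langle y, V\rangle = \langle y^\perp, V\rangle$ and $y^\perp$ lies in the span of fields in $\mathcal{W}_{-1}$). Substituting into \eqref{vv-3'}, the cross terms $-4h\langle H,V\rangle$ and $-2\langle y,V\rangle$ integrate to zero, leaving
\[
\mathcal{F}'' = (2\pi)^{-\f n2}\int_M\Bigl[-\sum_\alpha V^\alpha L V^\alpha - 2h^2|H|^2 - |y^\perp|^2\Bigr]e^{-\f{|x|^2}2}d\mu
= \mu\,\langle V,V\rangle_w - 2h^2\!\int_M|H|^2 e^{-\f{|x|^2}2} - \!\int_M|y^\perp|^2 e^{-\f{|x|^2}2}.
\]
The last two terms are $\le 0$, and $\mu\langle V,V\rangle_w < 0$ strictly, so $\mathcal{F}'' < 0$ no matter how $y$ and $h$ are chosen. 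Since $V$ has compact support (it is a smooth section on a closed manifold), this contradicts Definition \ref{defn3-1}. Therefore no such $\mu$ exists, and the only possible negative eigenvalues of $\mathcal{I}$ are $-2$ and $-1$.

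I expect the only point needing a little care — the main (minor) obstacle — is justifying that $\langle y, V\rangle_w = 0$ for $V \perp_w \mathcal{W}_{-1}$: one must note that for a constant vector $y$, the tangential part contributes nothing since $V$ is normal, so $\langle y, V\rangle = \langle y^\perp, V\rangle$, and $y^\perp = \sum_\alpha \langle y, e_\alpha\rangle e_\alpha$ is precisely the normal projection of a constant field, which by Proposition \ref{prop-5-1} lies in $\mathcal{W}_{-1}$; orthogonality in the weighted $L^2$ sense then kills the pairing. Everything else is a direct substitution into the already-established second variation formula together with the spectral decomposition of $\mathcal{I}$.
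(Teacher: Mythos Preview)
Your proof is correct and follows essentially the same approach as the paper: pick a nonzero eigensection $V\in\mathcal{W}_\mu$ with $\mu<0$, $\mu\neq -2,-1$, use orthogonality of eigenspaces (via Proposition~\ref{prop-5-1}) to kill the cross terms $-4h\langle H,V\rangle$ and $-2\langle y,V\rangle$ in \eqref{vv-3'}, and conclude that $\mathcal{F}''<0$ for every $h,y$, contradicting $\mathcal{F}$-stability. Your extra remark that $\langle y,V\rangle=\langle y^\perp,V\rangle$ (since $V$ is normal) is exactly the small justification the paper leaves implicit; the only cosmetic slip is a dropped factor of $(2\pi)^{-n/2}$ in your displayed equality, which does not affect the sign argument.
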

\begin{proof}
We prove the theorem by a contradiction. Suppose there exists another eigenvector field $V\in NM$ of $\mathcal{I}$ corresponding to the eigenvalue $\mu<0$ and $\mu\neq -2,-1$. Since the eigenvector fields corresponding to different eigenvalues are orthogonal with respect to the weighted $L^2$ inner product, we have
\begin{align*}
    &\int_M<H,V>e^{-\f 12|x|^2}d\mu=0,\quad \int_M<y^{\perp},V>e^{-\f 12|x|^2}d\mu=0,
\end{align*}
for any constant vector field $y\in\R^{n+p}$. Then put $V$ into the second variation formula \eqref{vv-3'}, we have
\begin{align*}
    \mathcal{F}''=&(2\pi)^{-\f n2}\int_M\biggl[-\sum_{\alpha}V^{\alpha}LV^{\alpha}-2h^2|H|^2\\
    &\qquad -4h<H,V>-2<y,V>-|y^{\perp}|^2\biggr]e^{-\f 12|x|^2}d\mu\\
    =&(2\pi)^{-\f n2}\int_M\biggl[-\sum_{\alpha}V^{\alpha}LV^{\alpha}-2h^2|H|^2-|y^{\perp}|^2\biggr]e^{-\f 12|x|^2}d\mu\\
    \leq &(2\pi)^{-\f n2}\mathcal{I}(V,V)\\
    <&0
\end{align*}
for any choice of $h\in\R$ and $y\in\R^{n+p}$. This implies $M^n$ is $\mathcal{F}$-unstable.
\end{proof}

\section{$\mathcal{F}$-stability for closed self-shrinkers}

In higher codimension, the study of $\mathcal{F}$-stability of self-shrinkers becomes complicated as the codimension increases. First we will prove the n-sphere $\SS^n(\sqrt{n})$ in $\R^{n+p}$ is $\mathcal{F}$-stable.

\begin{prop}
The n-sphere $x:\SS^n(\sqrt{n})\ra\R^{n+p}$ is $\mathcal{F}$-stable as a self-shrinker.
\end{prop}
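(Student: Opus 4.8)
The plan is to diagonalize the quadratic form $\mathcal{I}$ explicitly on $\SS^n(\sqrt n)$, observe that its only negative eigenvalues are $-2$ and $-1$ with eigenspaces $\mathbb{R}H$ and $\{y^{\perp}:y\in\R^{n+p}\}$, and then, for an arbitrary normal variation $V$, choose the auxiliary parameters $h$ and $y$ in the second variation formula \eqref{vv-3'} so as to annihilate exactly the $\mathcal{W}_{-2}$- and $\mathcal{W}_{-1}$-contributions, leaving a non-negative remainder. Since $\SS^n(\sqrt n)$ is closed and satisfies $H=-x$, it is a critical point of $\mathcal{F}_{0,1/2}$ by Theorem \ref{thm-1st}, so \eqref{vv-3'} applies and every compactly supported variation is just an arbitrary smooth normal variation.

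First I would record the relevant geometry. Viewing $\SS^n(\sqrt n)\subset\R^{n+1}\subset\R^{n+p}$ and setting $\nu=-x/\sqrt n=H/\sqrt n$, one has $h^{\nu}_{ij}=\f{1}{\sqrt n}\delta_{ij}$, so $\sigma_{\nu\nu}=1$, while the second fundamental form vanishes in the directions spanning $(\R^{n+1})^{\perp}$, so all remaining components of $\sigma$ vanish. Two simplifications are crucial: on $M$ one has $|x|^2\equiv n$ and $x^{T}=0$, so $\mathcal{L}=\Delta-\langle x,\nabla\cdot\rangle$ reduces to the round-sphere Laplacian $\Delta$ and the weight $e^{-|x|^2/2}$ is constant; and $\bar\nabla_{e_i}\nu=-e_i/\sqrt n$ is tangent, so $\nabla^{\perp}\nu=0$ and $NM$ splits orthogonally and parallelly as $\mathbb{R}\nu\oplus N_2$ with $N_2:=(\R^{n+1})^{\perp}$ a flat sub-bundle.

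Next I would compute the spectrum of $\mathcal I$. For $V=\phi\nu+W$ with $W=\sum_{\beta}f^{\beta}e_{\beta}$ a section of $N_2$, the decoupling gives $LV\leftrightarrow(\Delta\phi+2\phi)\nu\oplus\bigoplus_{\beta}(\Delta f^{\beta}+f^{\beta})e_{\beta}$, hence
\begin{equation*}
\mathcal{I}(V,V)=\int_M\Big(|\nabla\phi|^2-2\phi^2+\sum_{\beta}\big(|\nabla f^{\beta}|^2-(f^{\beta})^2\big)\Big)e^{-\f{|x|^2}{2}}d\mu.
\end{equation*}
Since $-\Delta$ on $\SS^n(\sqrt n)$ has eigenvalues $0,1,\f{2(n+1)}{n},\dots$, the operator $-(\Delta+2)$ on the line $\mathbb{R}\nu$ has eigenvalues $-2,-1,\f2n,\dots$ (eigenfunctions: constants, the restrictions of $x_1,\dots,x_{n+1}$, higher harmonics), while $-(\Delta+1)$ on each copy of $N_2$ has eigenvalues $-1,0,\dots$. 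Therefore the negative eigenvalues of $\mathcal{I}$ are precisely $-2$ and $-1$, with $\mathcal{W}_{-2}=\mathbb{R}\nu=\mathbb{R}H$ and, using $(e_i)^{\perp}=-\f{x_i}{\sqrt n}\nu$ for $i\le n+1$ and $(e_{\beta})^{\perp}=e_{\beta}$ otherwise,
\begin{equation*}
\mathcal{W}_{-1}=\{\phi\nu:\phi\ \text{linear}\}\oplus\{\text{constant sections of }N_2\}=\{y^{\perp}:y\in\R^{n+p}\},
\end{equation*}
and $y\mapsto y^{\perp}$ is a linear isomorphism onto $\mathcal{W}_{-1}$. (This recovers Proposition \ref{prop-5-1} on the sphere.)

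Finally I would finish by completing the square. Given any normal variation $V$, decompose $V=V_{-2}+V_{-1}+V_0$ orthogonally in $\langle\cdot,\cdot\rangle_w$ with $V_{-2}\in\mathcal{W}_{-2}$, $V_{-1}\in\mathcal{W}_{-1}$ and $V_0\perp\mathcal{W}_{-2}\oplus\mathcal{W}_{-1}$; then $\mathcal{I}(V_0,V_0)\ge0$ and $\mathcal{I}(V,V)=-2\|V_{-2}\|_w^2-\|V_{-1}\|_w^2+\mathcal{I}(V_0,V_0)$. Writing $V_{-2}=aH$, I would take $h=-a$ and pick $y\in\R^{n+p}$ with $y^{\perp}=-V_{-1}$ (possible by the previous step). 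Substituting into \eqref{vv-3'} and using $|H|^2\equiv n$, $\langle H,V\rangle_w=a\|H\|_w^2$, $\langle y,V\rangle_w=\langle y^{\perp},V_{-1}\rangle_w=-\|V_{-1}\|_w^2$ and $\|y^{\perp}\|_w^2=\|V_{-1}\|_w^2$, the $V_{-2}$-terms collapse to $-2\|H\|_w^2(a+h)^2=0$ and the $V_{-1}$-terms to $(-1+2-1)\|V_{-1}\|_w^2=0$, so $\mathcal{F}''=(2\pi)^{-n/2}\mathcal{I}(V_0,V_0)\ge0$, proving $\SS^n(\sqrt n)$ is $\mathcal F$-stable. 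The main obstacle is the spectral computation of the third paragraph: one must use that the first nonzero eigenvalue of the round-sphere Laplacian on $\SS^n(\sqrt n)$ equals $1$ with eigenspace spanned by the ambient coordinate functions, and then track the shifts ($+2$ on $\mathbb{R}\nu$, $+1$ on $N_2$) carefully enough to be sure that no eigenvalue of $\mathcal{I}$ other than $-2,-1$ is negative and that $\mathcal{W}_{-1}$ is exhausted by the $y^{\perp}$; granting the second variation formula of Section 4, the parameter choices and the completing-the-square step are routine.
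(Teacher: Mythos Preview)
Your proposal is correct and follows essentially the same approach as the paper: split the normal bundle of $\SS^n(\sqrt n)$ as $\mathbb{R}\nu\oplus N_2$ with $N_2$ trivial, reduce $L$ to $\Delta+2$ on the $\nu$-factor and $\Delta+1$ on $N_2$, use the sphere Laplacian spectrum to locate the negative directions, and then choose $h$ and $y$ to cancel them. Your framing via the eigenspaces $\mathcal{W}_{-2}=\mathbb{R}H$ and $\mathcal{W}_{-1}=\{y^{\perp}\}$ together with the completing-the-square step is a bit more streamlined than the paper's coordinate computation (and in particular handles the $N_2$-components uniformly by projecting onto constants rather than splitting into ``$V^{\alpha}$ constant/non-constant'' cases), but the underlying argument is the same.
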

\begin{proof}
Choose orthonormal basis $\{e_{n+1},\cdots,e_{n+p-1},e_{n+p}\}$ of the normal bundle of $\SS^n(\sqrt{n})$ such that $e_{n+p}$ is parallel to the mean curvature vector $H$, and $e_{n+1},\cdots,e_{n+p-1}$ are constant vector fields in $\R^{n+p}$. $e_{n+p}$ is parallel in the normal bundle, i.e., $\nabla^{\perp}e_{n+p}=0$. The second fundamental form satisfies $|A|^2=\sum\limits_{i,j}(h^{n+p}_{ij})^2=1$, and $h^{\alpha}_{ij}=0$ for $\alpha\neq n+p$. The position vector $x$ and the mean curvature vector $H$ satisfy $H=-x$ and $|H|^2=|x|^2=n$.

For any variation vector field $V=\sum V^{\alpha}e_{\alpha}$. Since the position vector $x$ in normal on $\SS^n(\sqrt{n})$, we have $<x,e_k>$=0 for all $1\leq k\leq n$. It follows that the second variation formula \eqref{vv-3} becomes
\begin{align*}
    \mathcal{F}''=&(2\pi)^{-\f n2}e^{-\f n2}\int_M\biggl[-V^{n+p}L_{\nu}V^{n+p}-2nh^2-4\sqrt{n}hV^{n+p}\\
    &\quad -2V^{n+p}<y,e_{n+p}>-<y,e_{n+p}>^2+\sum_{\alpha\neq n+p}\biggl(|\nabla V^{\alpha}|^2-|V^{\alpha}|^2\\
    &\quad -2<y,V^{\alpha}e_{\alpha}>-<y,e_{\alpha}>^2\biggr)\biggr]d\mu,
\end{align*}
where $\nabla$ denotes the gradient of functions on $\SS^n(\sqrt{n})$,  the operator $L_{\nu}=\Delta +2$ also acts on smooth functions on $\SS^n(\sqrt{n})$.

Recall that the eigenvalues of $\Delta$ on the sphere $\SS^n(\sqrt{n})$ are given by (see \cite{Cha})
\begin{align}
    \mu_k=\f{k^2+(n-1)k}{n}.
\end{align}
Clearly, the constant functions are eigenfunctions corresponding to the zero eigenvalue $\mu_0=0$. Note that the position vector $x$ satisfies $\Delta x=H=-x$, so for any constant vector $z\in\R^{n+p}$, we have
\begin{align*}
    -\Delta <z,e_{n+p}>=\Delta <z,\f{x}{\sqrt{n}}>=<z,e_{n+p}>,
\end{align*}
i.e., $<z,e_{n+p}>$ are eigenfunction of $\Delta$ corresponding to the first eigenvalue $\mu_1=1$. Now we choose $a\in\R$ a constant real number and $z\in\R^{n+p}$ a constant vector such that
\begin{align}
    V^{n+p}=f_0+a+<z,e_{n+p}>,
\end{align}
with $f_0$ in the space spanned by all the eigenfunctions for $\mu_k (k\geq 2)$ of $\Delta$ on $\SS^n(\sqrt{n})$.  By the orthogonality of the different eigenspaces, we have
\begin{align*}
&\int_M\biggl(-V^{n+p}L_{\nu}V^{n+p}-2nh^2-4\sqrt{n}hV^{n+p}\\
&\qquad-2V^{n+p}<y,e_{n+p}>-<y,e_{n+p}>^2\biggr)d\mu\\
=&\int_M\biggl(-f_0(\Delta f_0+2f_0)-2(a+\sqrt{n}h)^2-<z+y,e_{n+p}>^2\biggr)d\mu\\
\geq & \int_M\biggl(\f 2nf_0^2-2(a+\sqrt{n}h)^2-<z+y,e_{n+p}>^2\biggr)d\mu,
\end{align*}
which can be made nonnegative by choosing $h=-a/{\sqrt{n}}$ and
\begin{align}
    y-\sum_{\alpha=n+1}^{n+p-1}<y,e_{\alpha}>e_{\alpha}=z-\sum_{\alpha=n+1}^{n+p-1}<z,e_{\alpha}>e_{\alpha}.
\end{align}
On the other hand, since $e_{\alpha}, \alpha=n+1,\cdots,n+p-1$ are constant vectors in $\R^{n+p}$, if for some $\alpha\neq n+p$, $V^{\alpha}$ is constant function, then we can choose $<y,e_{\alpha}>=-V^{\alpha}$ such that
\begin{align*}
    &\int_M\biggl(|\nabla V^{\alpha}|^2-|V^{\alpha}|^2-2<y,V^{\alpha}e_{\alpha}>-<y,e_{\alpha}>^2\biggr)d\mu=0.
\end{align*}
If $V^{\alpha}$ is not a constant, since the first eigenvalue of $\Delta$ on sphere $\SS^n(\sqrt{n})$ is $\mu_1=1$, by choosing $<y,e_{\alpha}>=0$, we can obtain
\begin{align*}
    &\int_M\biggl(|\nabla V^{\alpha}|^2-|V^{\alpha}|^2-2<y,V^{\alpha}e_{\alpha}>-<y,e_{\alpha}>^2\biggr)d\mu\\
    =&\int_M\biggl(|\nabla V^{\alpha}|^2-|V^{\alpha}|^2\biggr)d\mu\\
    \geq &0.
\end{align*}
The vector $y$ we chosen as above is a constant vector in $\R^{n+p}$,  because all of $z,e_{n+1},\cdots,e_{n+p-1}$ are constant vectors in $\R^{n+p}$.
It follows that for any variation vector field $V$, we can choose constant real number $h\in\R$ and constant vector $y\in\R^{n+p}$ such that $\mathcal{F}''\geq 0$, this means that the sphere $\SS^n(\sqrt{n})$ is $\mathcal{F}$-stable.

\end{proof}

Conversely, we want to determine which closed self-shrinkers are $\mathcal{F}$-stable. In general, this is complicated in the higher codimension. In the following, we will consider the special situation, ``self-shrinkers with parallel principal normal''.
\begin{thm}
Let $x:M^n\ra\R^{n+p}$ be a closed $\mathcal{F}$-stable self-shrinker with parallel principal normal. Then $M^n$ is a minimal submanifold in the sphere $\SS^{n+p-1}(\sqrt{n})$.
\end{thm}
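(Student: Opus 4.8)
The plan is to show that $|x|^2\equiv n$ on $M$; this is the entire content of the statement. Indeed, if $|x|^2$ is constant then $x^T=\f12\nabla|x|^2=0$, so $x$ is everywhere normal and the self-shrinker equation becomes $H=-x$. Comparing with the second fundamental form of the round sphere $\SS^{n+p-1}(|x|)\subset\R^{n+p}$, the mean curvature vector of $M$ inside $\SS^{n+p-1}(|x|)$ equals $\bigl(\f{n}{|x|^2}-1\bigr)x$, which is simultaneously tangent to that sphere and parallel to $x$, hence zero, and $|x|^2=n$. Thus $M\subset\SS^{n+p-1}(\sqrt n)$ and is minimal there, so it remains to prove $|x|^2\equiv n$, equivalently $|x^T|^2=|x|^2-|H|^2\equiv 0$ (here $x^\perp=-H$, so $|x^\perp|=|H|$).

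First I would pass to a codimension-one picture using $\nabla^\perp\nu=0$. For a normal variation $V=f\nu$ we have $\nabla^\perp V=df\otimes\nu$, so the second variation form $\mathcal{I}$ of Section 5 restricts to
\[
\mathcal{I}(f\nu,f\nu)=\int_M\bigl(|\nabla f|^2-(|A^\nu|^2+1)f^2\bigr)e^{-\f{|x|^2}{2}}\,d\mu=:\mathcal{I}_\nu(f),
\]
the quadratic form associated with the scalar operator $L_\nu=\Delta-\langle x,\nabla(\cdot)\rangle+|A^\nu|^2+1$. Evaluating the definition \eqref{ope-vv} of $L$ together with \eqref{eq2-4} on the $\nu$-component of $H$ gives $L_\nu|H|=2|H|$. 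By the necessary condition of the previous section, the only negative eigenvalues of $\mathcal{I}$ are $-2$ and $-1$, so the bottom of its spectrum is $-2$; hence $\mathcal{I}_\nu(f)=\mathcal{I}(f\nu,f\nu)\ge -2\|f\|_w^2$ for all $f$, and combined with $\mathcal{I}_\nu(|H|)=-2\||H|\|_w^2$ this shows $|H|$ achieves the bottom eigenvalue $-2$ of $L_\nu$. Since $L_\nu$ is a self-adjoint Schr\"odinger-type operator on the closed connected $M$, its lowest eigenspace is one-dimensional and generated by a function of one sign; as $0\le|H|$ lies in it, either $H\equiv0$ --- impossible, since then each coordinate function is harmonic on $M$ and $M$ collapses to a point --- or $|H|>0$ everywhere, so $\nu=H/|H|$ is a globally defined smooth parallel unit normal.

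With $|H|>0$ I would assemble the principal identities: $\mathcal{L}|H|=(1-|A^\nu|^2)|H|$ and $\nabla|H|=A^\nu(x^T)$ (from \eqref{eq2-2}, \eqref{eq2-4}), $\langle x,\nu\rangle=-|H|$, $|x|^2=|x^T|^2+|H|^2$, $\f12\mathcal{L}|x|^2=n-|x|^2$, the commutation $[A^\nu,A^\beta]=0$ for every $\beta$ (the Ricci equation with $R^\perp(\cdot,\cdot)\nu=0$), and the integral identities of Lemma \ref{lem-v3-5} and Corollary \ref{coor-v1}. These give
\[
\mathcal{L}|x^T|^2=2n-2|x^T|^2-4|H|^2+2|A^\nu|^2|H|^2-2|\nabla|H||^2,
\]
so at an interior maximum $q$ of $|x^T|^2$ one has $n-|x^T|^2-2|H|^2+|A^\nu|^2|H|^2-|\nabla|H||^2\le 0$. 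Using the Cauchy--Schwarz estimates $|H|^2=(\mathrm{tr}\,A^\nu)^2\le n|A^\nu|^2$ (whence $(2-|A^\nu|^2)|H|^2\le n$) and $|\nabla|H||^2=|A^\nu(x^T)|^2\le|A^\nu|^2|x^T|^2$, together with one further use of $\mathcal{F}$-stability through the variations $\psi H$ --- which after the substitution $V=|H|\psi\,\nu$ yields the clean identity $\mathcal{I}_\nu(|H|\psi)=\int_M|H|^2(|\nabla\psi|^2-2\psi^2)e^{-\f{|x|^2}{2}}\,d\mu$ and hence a weighted Poincar\'e-type inequality bounding the deviation of $|A^\nu|^2$ from $1$ --- the goal is to force $|x^T|^2\equiv 0$, i.e.\ $A^\nu$ umbilic with $|H|\equiv\sqrt n$, which is exactly $|x|^2\equiv n$.

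The first two steps are essentially formal; the heart of the argument, and the main obstacle, is the last one: closing the maximum principle demands a global comparison of $|A^\nu|^2$ with $|H|^2$, for which the only available leverage is $\mathcal{F}$-stability together with the parallel-principal-normal structure (notably $[A^\nu,A^\beta]=0$, which uncouples the Simons-type term along $\nu$ from the remaining second fundamental form). This is the higher-codimension analogue, for the operator $L_\nu$ and the principal second fundamental form $A^\nu$, of Huisken's classification of compact mean-convex self-shrinkers and of the mean-convexity step of Colding--Minicozzi, and I expect essentially all of the real work to lie there.
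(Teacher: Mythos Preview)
Your first step --- ruling out zeros of $H$ --- is correct and matches the paper's argument, only phrased through Proposition~5.2 rather than directly. The paper argues the contrapositive: if $\langle H,e_{n+p}\rangle$ vanishes somewhere then $-2$ is not the bottom eigenvalue of $L_\nu$, so there is a positive eigenfunction $f$ with eigenvalue $\mu<-2$; since $f$ is weighted-$L^2$ orthogonal to $\langle H,e_{n+p}\rangle$ and to each $\langle y,e_{n+p}\rangle$, plugging $V=fe_{n+p}$ into \eqref{vv-parall} gives $\mathcal{F}''<0$ for every $h$ and $y$.

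The gap is in your second step. Once $|H|>0$ everywhere, the paper does no further analysis at all: it simply invokes Smoczyk's classification theorem \cite{Smo}, which states that any closed self-shrinker with parallel principal normal and nowhere-vanishing $H$ is a minimal submanifold of $\SS^{n+p-1}(\sqrt n)$. Smoczyk's result uses no stability hypothesis, so your assertion that at this stage ``the only available leverage is $\mathcal{F}$-stability'' is mistaken --- the $\mathcal{F}$-stability has already done all of its work in step~1, and what remains is a rigidity theorem valid for every such self-shrinker, stable or not. Your proposed route (a maximum principle on $|x^T|^2$, supplemented by Poincar\'e-type inequalities from the ground-state identity $\mathcal{I}_\nu(|H|\psi)=\int_M|H|^2(|\nabla\psi|^2-2\psi^2)e^{-|x|^2/2}d\mu$) is left explicitly unfinished, and the stability input you hope to exploit is largely vacuous here: the bound $\mathcal{I}_\nu(|H|\psi)\ge -2\||H|\psi\|_w^2$ collapses to the triviality $\int_M|H|^2|\nabla\psi|^2e^{-|x|^2/2}\ge 0$, and you give no mechanism by which the finer spectral information would force $|A^\nu|^2\equiv 1$ or $|x^T|^2\equiv 0$. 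The honest completion is to cite Smoczyk or to reproduce his argument, which proceeds via Codazzi and Simons-type identities for $A^\nu$ (available precisely because $\nabla^\perp\nu=0$ makes $A^\nu$ a Codazzi tensor on $M$), not via further use of stability.
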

\begin{proof}
If $|H|\neq 0$ on $M$, then by Smoczyk's classification theorem \cite{Smo} on closed self-shrinker with parallel principal normal, $M^n$ is a minimal submanifold in the sphere $\SS^{n+p-1}(\sqrt{n})$.

If $H$ vanishes somewhere on $M$, we will show that $M$ is $\mathcal{F}$-unstable, and therefore contradicts with the assumption. The $\mathcal{F}$-unstable means that there exists a variation $V\in NM$ such that for any variation $y$ of $x_0=0$ and $h$ of $t_0=\f 12$, we always have $\mathcal{F}''< 0$. To prove $M$ is $\mathcal{F}$-unstable, we choose the variation vector field $V=fe_{n+p}$ with $f$ a smooth function on $M$ with $e_{n+p}$ parallel to the mean curvature vector $H$. Since $\nabla^{\perp}e_{n+p}=0$, for this vector $V=fe_{n+p}$, the second variation formula \eqref{vv-3} becomes
\begin{align}
    \mathcal{F}''=&(2\pi)^{-\f n2}\int_M\biggl[-fL_{\nu}f-2h^2|H|^2-4hf<H,e_{n+p}>\nonumber\\
    &\qquad -2f<y,e_{n+p}>-|y^{\perp}|^2\biggr]e^{-\f 12|x|^2}d\mu,\label{vv-parall}
\end{align}
where the operator $L_{\nu}$ is defined as
\begin{equation*}
   L_{\nu}f=\Delta f-<x,\nabla f>+|Z|^2 f+f,
\end{equation*}
with $|Z|^2=\sum\limits_{i,j}(h^{n+p}_{ij})^2$.

Since $e_{n+p}$ is parallel to $H$, we can write $H$ by
\begin{align*}
    H=\sum_{\alpha}H^{\alpha}e_{\alpha}=<H,e_{n+p}>e_{n+p},
\end{align*}
i.e., $H^{\alpha}=0$ for $\alpha\neq n+p$ and $H^{n+p}=<H,e_{n+p}>$. Note that $H^{\alpha}$ are the components of the tensor field $H$, and $<H,e_{n+p}>$ is just a function on $M$. Recall that for submanifold with parallel principal normal, we have (cf. \cite{LW})
\begin{align*}
    &H^{\alpha}_{,i}=0,\qquad H^{\alpha}_{,ij}=0, \alpha\neq n+p,\\
    &H^{n+p}_{,i}=<H,e_{n+p}>_{,i},\quad H^{n+p}_{,ij}=<H,e_{n+p}>_{,ij}.
\end{align*}
Combing with equation \eqref{eq2-4} gives
\begin{align}
    &L_{\nu}<H,e_{n+p}>=2<H,e_{n+p}>,\label{para-1}\\
    &L_{\nu}<y,e_{n+p}>=<y,e_{n+p}>, \quad y\in\R^{n+p}.\label{para-2}
\end{align}
The elliptic differential operator $L_{\nu}$ is self-adjoint with respect to the weighted $L^2$ inner product, then standard spectrum theory gives that $L_{\nu}$ has real eigenvalues
\begin{align*}
    \mu_1<\mu_2\leq\cdots\ra+\infty,
\end{align*}
and there are orthonormal basis $\{u_k\}$ for the weighted $L^2$ space with $L_{\nu}u_k=-\mu_ku_k$. The eigenfunctions corresponding to different eigenvalues are orthogonal with respect to the weighted $L^2$ inner product. Any eigenfunction corresponding to the smallest eigenvalue $\mu_1$ does not change sign. Therefore \eqref{para-1} and \eqref{para-2} imply $<H,e_{n+p}>$, $<y,e_{n+p}>$ are eigenfunction of $L_{\nu}$ corresponding to eigenvalues $-2$ and $-1$ respectively.

 Since $<H,e_{n+p}>$ vanishes somewhere on $M$, then $-2$ is not the smallest eigenvalue of the elliptic operator $L_{\nu}$. Thus there is a positive function $f$ with $-L_{\nu}f=\mu f$ $(\mu<-2)$. Then $f$ is orthogonal to $<H,e_{n+p}>$ and $<y,e_{n+p}>$ for $y\in\R^{n+p}$, i.e.,
 \begin{align*}
    &\int_Mf<H,e_{n+p}>e^{-\f 12|x|^2}d\mu=0,\\
    & \int_Mf<y,e_{n+p}>e^{-\f 12|x|^2}d\mu=0.
 \end{align*}
 Substituting these into \eqref{vv-parall} gives
 \begin{align}
 \mathcal{F}''=&(2\pi)^{-\f n2}\int_M\biggl(\mu f^2-2h^2|H|^2-|y^{\perp}|^2\biggr)e^{-\f 12|x|^2}d\mu\\
 < &0\nonumber
 \end{align}
 for any choice of $h$ and $y$. This means $M$ is $\mathcal{F}$-unstable, contradicts with the hypothesis of the theorem.
\end{proof}

\begin{rem}
From the proof of Theorem 6.2, we can see that for closed self-shrinker with parallel principal normal, if there is another negative  eigenvalue $\mu\neq -1, -2$ of $L_{\nu}$, i.e.,
\begin{align*}
    -L_{\nu}f=\mu f,\quad \mu\neq -1,-2,\mu<0
\end{align*}
for some eigenfunction $f$, then $M$ is $\mathcal{F}$-unstable.
\end{rem}

In general, not all the minimal submanifolds of spheres are $\mathcal{F}$-stable self-shrinkers, in the following, we will show that the only $\mathcal{F}$-stable one is the sphere $\SS^n(\sqrt{n})$.  The key observation is that the $\mathcal{F}$-stability is closely related to stability as a minimal surface of the sphere, and our argument is related to the argument of Simons \cite[Theorem 5.1.1]{Simon} on instability of minimal submanifolds of spheres.

\begin{thm}\label{thm-6-3}
Let $M^n$ be a closed minimal submanifold of $\SS^{n+p-1}(\sqrt{n})\subset\R^{n+p}$. If $M$ is $\mathcal{F}$-stable, then $M$ is the $n$-sphere $\SS^n(\sqrt{n})$.
\end{thm}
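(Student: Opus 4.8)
The plan is to exploit the extra structure that a minimal submanifold $M^n\subset\SS^{n+p-1}(\sqrt n)$ carries: the position vector $x$ is everywhere normal to $M$ (since $|x|^2\equiv n$ forces $x^T=0$), so $\langle x,e_k\rangle=0$ for all $1\le k\le n$, and the mean curvature vector is $H=-x$ with $|H|^2=n$ constant. Writing $\nu=-x/\sqrt n$ for the unit normal parallel to $H$, one checks $\nabla^\perp\nu=0$, so $M$ has parallel principal normal and Theorem~6.2's analysis applies: the operator $L$ on $NM$ decouples into the scalar operator $L_\nu f=\Delta f+|A^\nu|^2f+f$ acting on the $\nu$-component (here $\langle x,\nabla f\rangle=0$), and the operator $f\mapsto \Delta^\perp - \text{(Simons-type term)} - \mathrm{Id}$ on the subbundle $E$ of $NM$ orthogonal to $\nu$. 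My strategy is to produce an explicit destabilizing variation in the $E$-directions unless $M$ is totally geodesic in the sphere, i.e.\ unless $M=\SS^n(\sqrt n)$.

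First I would handle the $\nu$-direction. Since $M$ is minimal in $\SS^{n+p-1}(\sqrt n)$, the ambient second fundamental form of $\SS^{n+p-1}(\sqrt n)$ contributes, and a computation (as in Simons) shows $|A^\nu|^2 = |\mathring A^\nu|^2 + 1 \ge 1$ where $\mathring A^\nu$ is the second fundamental form of $M$ inside the sphere. If $M$ is \emph{not} totally geodesic in the sphere, then $|A^\nu|^2>1$ on a set of positive measure, and testing $L_\nu$ against a suitable function (or coordinate functions of the sphere, which satisfy $\Delta f=-f/\text{?}$ ... more carefully, the restrictions of linear functions of $\R^{n+p}$) produces an eigenvalue of $L_\nu$ strictly below $-2$; by Remark~6.1 this already gives $\mathcal F$-instability. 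The cleaner route, and the one I would actually carry out, is the Simons-type argument in the $E$-directions: for each constant vector $a\in\R^{n+p}$ let $a^E$ denote the projection of $a$ onto $E\subset NM$, and compute $\mathcal I(a^E,a^E)$. Using the structure equations one finds, after summing over an orthonormal basis of $\R^{n+p}$,
\[
\sum_a \mathcal I(a^E,a^E) = \int_M\Big(\text{(curvature terms)} - (\dim E)\,|\text{proj}|^2 + \cdots\Big)e^{-|x|^2/2}\,d\mu,
\]
and the key point is that the destabilizing term comes precisely from the second fundamental form of $M$ in $\SS^{n+p-1}(\sqrt n)$: if that is nonzero the sum is negative, so some $a^E$ has $\mathcal I(a^E,a^E)<0$. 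Since $a^E$ is, by construction, $L^2_w$-orthogonal to $H$ (which is a multiple of $\nu$, hence lies entirely in the $\nu$-line) and to every $y^\perp$ restricted to $E$ one arranges the orthogonality to the $\nu$-component of $y^\perp$ by choosing $y$ appropriately, so the $h$- and $y$-terms in the second variation formula \eqref{vv-3'} can only make $\mathcal F''$ more negative, and $M$ is $\mathcal F$-unstable.

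Hence an $\mathcal F$-stable minimal submanifold of $\SS^{n+p-1}(\sqrt n)$ must be totally geodesic in the sphere, i.e.\ an $n$-sphere $\SS^n(\sqrt n)$ sitting in a linear $\R^{n+1}\subset\R^{n+p}$. The main obstacle I anticipate is the bookkeeping in the $E$-direction computation: one must correctly identify the induced connection $\nabla^\perp$ on $E$, the relevant Simons/Ricci-type curvature term, and the contribution of the sphere's own second fundamental form, and then verify that summing over a basis of $\R^{n+p}$ (rather than over a frame of $E$) produces a term with a definite sign. A secondary subtlety is checking that the candidate vector fields $a^E$ are genuinely orthogonal to the $(-2)$- and $(-1)$-eigenspaces so that Proposition~5.4 (only $-2,-1$ allowed as negative eigenvalues) can be invoked directly, or equivalently that $\mathcal F''$ with the optimal choice of $(h,y)$ remains negative; I would carry this out by the direct substitution into \eqref{vv-3'} as in the proof of Theorem~6.2 rather than by quoting Proposition~5.4, to keep control of the $y$-dependence.
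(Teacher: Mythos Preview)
Your proposal has a genuine gap. You choose the same test variations as the paper---$a^E=\pi_{E}(a)$ for constant $a\in\R^{n+p}$---but misidentify why they work. A side remark first: for minimal $M\subset\SS^{n+p-1}(\sqrt n)$ the principal normal is radial with $h^\nu_{ij}=\frac{1}{\sqrt n}\delta_{ij}$, so $|A^\nu|^2\equiv 1$ identically; the second fundamental form of $M$ in the sphere sits entirely in the $E$-directions, so your $\nu$-direction sketch cannot detect non-geodesicity. More seriously, for the $E$-direction: since $\nu$ is parallel and $\sigma_{\alpha\beta}$ is block-diagonal in the splitting $E\oplus\R\nu$, Proposition~\ref{prop-5-1} restricts to give $L(a^E)=a^E$ for every constant $a$, hence $\mathcal I(a^E,a^E)=-\|a^E\|_w^2$ with \emph{no curvature terms}---the Simons trace you anticipate does not appear, and this negativity carries no information about the second fundamental form in the sphere. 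Crucially, $a^E$ lies in the $(-1)$-eigenspace $\mathcal W_{-1}$, the \emph{same} eigenspace as the translation modes $y^\perp$; your claimed orthogonality $a^E\perp_w y^\perp$ is false (take $y=a$), and variations in $\mathcal W_{-1}$ are precisely what the $y$-freedom in the definition of $\mathcal F$-stability is designed to absorb. So $\mathcal I(a^E,a^E)<0$ alone yields no instability.

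The actual argument is pointwise, not spectral. With $V=a^E$ the second variation collapses to
\[
\mathcal F''=-(2\pi)^{-n/2}e^{-n/2}\int_M\Bigl(|\pi_{E}(a+y)|^2+2nh^2+\langle y,x/|x|\rangle^2\Bigr)\,d\mu,
\]
so $\mathcal F''\ge 0$ forces $h=0$, $\langle y,x\rangle\equiv 0$, and $\pi_E(a+y)\equiv 0$ \emph{pointwise} on $M$. $\mathcal F$-stability demands that such a constant $y$ exist for \emph{every} $a\in\R^{n+p}$. The paper finishes with elementary linear algebra: the subspace of admissible $y$'s (those with $\langle y,x\rangle\equiv 0$ on $M$) lies inside $E_x$ for every $x$ (differentiate the identity), so has dimension at most $p-1$; the requirement that every $a$ decompose as $(-y)+(\text{something in }\R x\oplus T_xM)$ forces that subspace to have dimension at least $p-1$; equality then confines $M$ to the intersection of its orthogonal complement with $\SS^{n+p-1}(\sqrt n)$, an $n$-sphere. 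Your outline never reaches this linear-algebraic step because it mistakes membership in $\mathcal W_{-1}$ for a destabilizing condition.
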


\begin{proof}
Let $x:\ M^n\ra\SS^{n+p-1}(\sqrt{n})$ be a closed minimal submanifold, then $H=-x$ and $|H|^2=|x|^2=n$.
Then at each point we have the orthogonal decomposition $\R^{n+p} = \R x\oplus T_xM\oplus N_xM$, where $N_xM$ is the normal bundle as a submanifold of $\SS^{n+p-1}$.

We choose the variation vector field
\begin{align}
    V(x)=\pi_{N_xM}(z),
\end{align}
where $z\in\R^{n+p}$ is a constant vector and $\pi_{N_xM}$ is the orthogonal projection. From  the computation in Proposition \ref{prop-5-1}, the second variation formula \eqref{vv-3'} becomes (in a local orthonormal frame $e_{\alpha}$ where $e_1,\ldots,e_n$ span $T_xM$, $e_{n+1},\ldots,e_{n+p-1}$ span $N_xM$, and $e_{n+p}$ is proportional to $x$)
\begin{align*}
    \mathcal{F}''=&(2\pi)^{-\f n2}e^{-\f n2}\int_M\biggl(-\sum_{\alpha\neq n+p}<z,e_{\alpha}>^2-2nh^2\\
    &\qquad -2\sum_{\alpha\neq n+p}<y,e_{\alpha}><z,e_{\alpha}>-|y^{\perp}|^2\biggr)d\mu\\
    =&-(2\pi)^{-\f n2}e^{-\f n2}\int_M\biggl(\sum_{\alpha\neq n+p}<z+y,e_{\alpha}>^2+2nh^2+<y,e_{n+p}>^2\biggr)d\mu\\
    =&-(2\pi)^{-\f n2}e^{-\f n2}\int_M\biggl(\left|\pi_{N_xM}(z+y)\right|^2+2nh^2+\left(y\cdot \f{x}{|x|}\right)^2
    \biggr)\,d\mu.
\end{align*}
If $M$ is $\mathcal{F}$-stable, then for any such $z$ there must exist some $h\in\R$ and constant vector $y\in\R^{n+p}$ such that $\mathcal{F}''\geq 0$. It follows from the above variation formula that $h=0$, and we necessarily have
\begin{equation}\label{eq:stab-conditions}
    <y,x>= 0,\quad\text{and}\quad z+y\in \R x\oplus T_xM,
\end{equation}
for every $x\in M$.

Let $V$ be the subspace of $\R^{n+p}$ defined by
\begin{align*}
    V=\{y\in\R^{n+p}:<y,x>= 0, \textrm{for all } x\in M\}.
\end{align*}
For $y\in V$ we also have $y\cdot v=0$ for any $v\in T_xM$, by differentiating the equation $y\cdot x=0$.  Therefore $V$ is orthogonal to $\R x\oplus T_xM$, and so $V$ is a subspace of $N_xM$.  In particular $V$ has dimension at most $p-1$.  Furthermore, if $V$ has dimension $p-1$ then we have $M\subset \SS^{n+p-1}\cap V^{\perp}$, which is an $n$-dimensional sphere.  It then follows by connectedness that $M$ is itself a totally geodesic $n$-dimensional sphere in $\SS^{n+p-1}$.

Now fix $x\in M$.  For any $z\in \R^{n+p}$, equation \eqref{eq:stab-conditions} implies that there exists $y\in V$ with $z+y\in \R x\oplus T_xM$, so that $z\in V\oplus \R x\oplus T_xM$.  Since $z$ is arbitrary, we have $\R^{n+p}= V\oplus \R x\oplus T_xM$, from which it follows that $V$ has dimension at least $p-1$.
Therefore $V$ has dimension exactly $p-1$ and $M$ is the $n$-sphere $\SS^{n+p-1}\cap V^{\perp}$. This completes the proof.
\end{proof}

Theorem \ref{thm-1-2} is a direct consequence of Proposition 6.1 and Theorems 6.2, 6.3.

\section{$\mathcal{F}$-stability for complete noncompact self-shrinker}

In this section, we suppose $x:M^n\ra\R^{n+p}$ is a complete noncompact self-shrinker with parallel principal normal and polynomial volume growth. We will show that the only $\mathcal{F}$-stable one is the plane $\R^n$. First we have the following two lemmas.

\begin{lem}\label{lem-7-1}
Let $N^k$ be a closed minimal submanifold in $\SS^{k+p-1}(\sqrt{k})$, then $x:M^n=N^k\times\R^{n-k}\ra\R^{n+p}$ is $\mathcal{F}$-unstable as a self-shrinker.
\end{lem}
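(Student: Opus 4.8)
The plan is to produce an explicit compactly supported normal variation $V$ on $M = N^k\times\R^{n-k}$ for which $\mathcal{F}''<0$ no matter how we choose the auxiliary parameters $h\in\R$ and $y\in\R^{n+p}$. The natural candidate comes from the ambient geometry: since $N^k$ is a minimal submanifold of $\SS^{k+p-1}(\sqrt{k})\subset\R^{k+p}$, the factor $N^k$ has mean curvature vector $H_N$ of length $\sqrt{k}$, while the $\R^{n-k}$ factor is flat; hence on $M$ the mean curvature vector is $H=H_N$ with $|H|^2=k$, and $e_{n+p}$ (parallel to $H$) is the principal normal, which is parallel in $NM$ because $N^k$ sits minimally in a round sphere. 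So $M$ is indeed a self-shrinker with parallel principal normal, and the second variation formula \eqref{vv-parall} with $L_\nu f=\Delta f-\langle x,\nabla f\rangle+|Z|^2 f+f$ applies for variations $V=fe_{n+p}$, where $|Z|^2=\sum_{i,j}(h^{n+p}_{ij})^2$ is the squared norm of the second fundamental form of $N^k$ (pulled back to $M$; it is independent of the $\R^{n-k}$ coordinates).

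The key point is a spectral one, exactly parallel to the closed case (Theorem 6.2 and the Remark following it): I claim $L_\nu$ on $M$ has a negative eigenvalue $\mu<-2$, which by the Remark forces $\mathcal{F}$-instability. To see this, recall that on the closed factor $N^k$ the function $\langle H_N,e_{n+p}\rangle$ is a constant equal (up to sign) to $\sqrt{k}$, and equation \eqref{para-1} (applied on the closed self-shrinker $N^k\subset\R^{k+p}$, or rather the computation leading to it) shows that this constant function satisfies $L_\nu^{N}(\mathrm{const})=(|Z|^2+1)\cdot\mathrm{const}=2\cdot\mathrm{const}$ when restricted to quantities only depending on $N$; more precisely $|Z|^2+1$ need not be constant but $\int_N(|Z|^2+1-2)e^{-|x_N|^2/2}$ is controlled, and one knows $1$ is not the bottom eigenvalue. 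The cleanest route is: the operator $L_\nu$ on $M=N^k\times\R^{n-k}$ splits as $L_\nu = L_\nu^{N}\oplus(\Delta_{\R^{n-k}}-\langle x',\nabla'\rangle)$ acting on the weighted product $L^2(N,e^{-|x_N|^2/2})\otimes L^2(\R^{n-k},e^{-|x'|^2/2})$, with the second factor being the Ornstein–Uhlenbeck operator whose spectrum is $\{0,-1,-2,\dots\}$ with bottom eigenvalue $0$ (eigenfunction $\equiv 1$). Hence the bottom of the spectrum of $L_\nu$ on $M$ equals the bottom of the spectrum of $L_\nu^{N}$ on the closed factor $N^k$. On $N^k$, the function $1$ is positive and satisfies $L_\nu^N 1 = (|Z|^2+1)\cdot 1$, and since $N$ is a genuine (nontotally geodesic, or even if totally geodesic) closed submanifold, $|Z|^2\geq 1$ somewhere (indeed $\int_N|Z|^2 = \int_N|H_N|^2 = k\,\mathrm{Vol}(N)>0$ forces $|Z|^2>0$ on a set of positive measure, and on a minimal submanifold of $\SS^{k+p-1}(\sqrt k)$ one has the Simons-type lower bound), so by the variational characterization the bottom eigenvalue $\mu_1^N$ of $L_\nu^N$ satisfies $-\mu_1^N \leq -\int_N(|Z|^2+1)e^{-|x_N|^2/2}/\mathrm{Vol}_w(N) < -2$ unless $|Z|^2\equiv 1$ and $N=\SS^k(\sqrt k)$ — but in that case $\langle H,e_{n+p}\rangle$ still vanishes nowhere, yet the product $\SS^k(\sqrt k)\times\R^{n-k}$ is exactly a shrinking cylinder, and one checks directly that $L_\nu$ on it has eigenvalue $-2$ with eigenfunction the constant, and bottom eigenvalue strictly below $-2$ coming from linear functions on the sphere factor... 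I should instead argue more robustly: take $f_0$ a first eigenfunction of $\Delta_N$ with eigenvalue $\lambda_1^N>0$; then on the cylinder case $-L_\nu f_0 = -(\Delta_N f_0 + 2f_0) = (\lambda_1^N-2)f_0$, and $\lambda_1^N=1$ for $\SS^k(\sqrt k)$ gives $-L_\nu f_0 = -f_0$, not yet below $-2$. The correct uniform statement, which I would prove, is that there is always a test function making the Rayleigh quotient $-\mathcal{I}_\nu$ strictly less than $-2$: use $V = f\,e_{n+p}$ with $f$ a bump function times $1$ on $N$, supported in a large ball of the $\R^{n-k}$ factor, and exploit $|Z|^2\not\equiv 0$.

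The execution would therefore be: (i) reduce, via the product structure of $L_\nu$ and the fact that the Ornstein–Uhlenbeck factor contributes bottom eigenvalue $0$, to showing that $L_\nu^N$ on the closed minimal submanifold $N^k\subset\SS^{k+p-1}(\sqrt k)$ has bottom eigenvalue $<-2$; (ii) prove that latter fact by testing with a positive function and using $\int_N |Z|^2 e^{-|x_N|^2/2} = \int_N |H_N|^2 e^{-|x_N|^2/2} = k\int_N e^{-|x_N|^2/2}>0$ together with the self-adjoint formula of Lemma 2.5/Corollary 2.6, giving $-\mathcal{I}_\nu(1,1)/\|1\|_w^2 = -\fint_N(|Z|^2+1) \le -\fint_N|Z|^2 - 1 = -(k+1) \le -2$ with equality forcing $k=1$; and for $k=1$ the closed minimal submanifold of $\SS^p(1)$ is a great circle, $N^1=\SS^1(1)$, and $L_\nu^N = \Delta_{\SS^1}+2$ on the unit circle has eigenvalues $2,1,-2,-7,\dots$ so bottom $= -7 < -2$ — in all cases strictly below $-2$; (iii) conclude by the Remark after Theorem 6.2 (which applies verbatim once we have a negative eigenvalue $\mu\neq -1,-2$, here $\mu<-2$) that $M$ is $\mathcal{F}$-unstable, after truncating the eigenfunction to have compact support and noting the correction terms are small. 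The main obstacle is step (iii)'s truncation: the eigenfunction of $L_\nu$ on the noncompact $M$ need not be compactly supported, so strictly one must either work with the Rayleigh quotient directly on compactly supported test functions — taking $V=\phi(x')\,e_{n+p}$ with $\phi$ supported in a large ball in $\R^{n-k}$ and equal to $1$ on most of it — and verify that for such $V$ one still has $\int_M\langle V,H\rangle\,e^{-|x|^2/2}$ and $\int_M\langle V,y^\perp\rangle\,e^{-|x|^2/2}$ either vanishing or negligible (they do, after adjusting $V$ by subtracting its small projections onto the $-2$ and $-1$ eigenspaces), and that the Rayleigh quotient of $-L_\nu$ on $V$ is still $<-2$ for $\phi$ supported far out, using the exponential weight to kill boundary contributions. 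This is the one place requiring genuine (though routine) estimates; everything else is algebra and the spectral bookkeeping above.
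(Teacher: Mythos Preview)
Your strategy has a genuine gap. For a minimal submanifold $N^k\subset\SS^{k+p-1}(\sqrt{k})$, the second fundamental form in the radial direction $e_{n+p}=x_N/\sqrt{k}$ is $h^{n+p}_{ij}=-\delta_{ij}/\sqrt{k}$, so $|Z|^2=\sum_{i,j}(h^{n+p}_{ij})^2=1$ identically, not $|H_N|^2=k$ as you assert. Hence $L_\nu^N=\Delta_N+2$, and testing with the constant $1$ gives Rayleigh quotient exactly $-2$; the bottom eigenvalue of $-L_\nu^N$ is $-2$, achieved by the constants --- which is precisely $\langle H,e_{n+p}\rangle$. Tensoring with the Ornstein--Uhlenbeck factor (bottom eigenvalue $0$) does not lower this, so the bottom of the spectrum of $-L_\nu$ on $M$ is also $-2$. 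There is no eigenvalue strictly below $-2$, and your whole plan of invoking the Remark after Theorem~6.2 with some $\mu<-2$ cannot succeed. (Your $k=1$ check is also miscomputed: on $\SS^1(1)$ the eigenvalues of $-L_\nu^N=-\Delta_{\SS^1}-2$ are $-2,-1,-1,2,2,7,\dots$, with bottom $-2$, not $-7$.)

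The paper's argument exploits a different phenomenon: the eigenspace for $-1$ of $L_\nu$ on $M=N^k\times\R^{n-k}$ is strictly larger than the span of the functions $\langle y,e_{n+p}\rangle$. Since $e_{n+p}=x_N/\sqrt{k}$ depends only on the $N$ factor, every $\langle y,e_{n+p}\rangle$ is a function on $N$ alone. But the coordinate $x_1$ on $\R^{n-k}$ also satisfies $L_\nu x_1=x_1$, and by Fubini it is weighted-$L^2$ orthogonal to every $\langle y,e_{n+p}\rangle$ and (being odd) to the constant $\langle H,e_{n+p}\rangle=\sqrt{k}$. Taking $V=f_j e_{n+p}$ with $f_j=\phi_j(x')\,x_1$ a cutoff, the cross terms $\int f_j\langle H,e_{n+p}\rangle$ and $\int f_j\langle y,e_{n+p}\rangle$ in \eqref{vv-parall} vanish (the first in the limit, the second exactly), and one is left with $\mathcal{F}''\to -\int x_1^2\,e^{-|x|^2/2}-2kh^2-\int|y^\perp|^2<0$ for every $h,y$. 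The instability thus comes from an \emph{extra} $-1$ mode supplied by the Euclidean factor, not from a mode below $-2$.
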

\begin{proof}
We choose local orthonormal frame $\{e_{\alpha}\}$ for the normal bundle of $M$ such that $e_{n+p}$ is proportional to the mean curvature vector $H$. We set the variation vector $V=fe_{n+p}$, we want to find some function $f$ with compact support such that the second variation $\mathcal{F}''$ is negative for every choice of $h$ and $y$. Since $\nabla^{\perp}e_{n+p}=0$, $|H|^2=|x^{\perp}|^2=k$ and $|Z|^2=\sum\limits_{i,j}(h^{n+p}_{ij})^2=1$, as derived in the previous section, we have the second variation formula
\begin{align}
    \mathcal{F}''=&(2\pi)^{-\f n2}\int_M\biggl[-fL_{\nu}f-2kh^2-4\sqrt{k}hf\nonumber\\
    &\qquad -2f<y,e_{n+p}>-|y^{\perp}|^2\biggr]e^{-\f 12|x|^2}d\mu,\label{eq-7-1}
\end{align}
where the operator $L_{\nu}$ is defined as
\begin{equation*}
   L_{\nu}f=\Delta f-<x,\nabla f>+2f.
\end{equation*}
Let $x_1$ be the coordinate function corresponding to the first coordinate in the $\R^{n-k}$, then Lemma \ref{lem-3-2} implies
\begin{align}\label{eq-7-2}
    &\mathcal{L}x_1=-x_1,  &L_{\nu}x_1=x_1.
\end{align}
Since $M$ has polynomial volume growth, it follows from the self-adjointness of $\mathcal{L}$ in the weighted $L^2$ space that
\begin{align}\label{eq-7-3}
    0=\int_M\mathcal{L}x_1e^{-\f 12|x|^2}d\mu=-\int_Mx_1e^{-\f 12|x|^2}d\mu.
\end{align}
For any constant vector $y\in\R^{n+p}$, we know that $<y,e_{n+p}>$ is an eigenfunction of $\Delta_{N^k}$ on the $N^k$ factor (cf. \cite{Ta}). Let $x'=(x_1,\cdots,x_{n-k})$ be the coordinates of $\R^{n-k}$, then $<y,e_{n+p}>$ is independent of $x'$. Moreover, it follows from the Fubini's theorem that for any bounded function $\phi(x')$ on $\R^{n-k}$,
\begin{align}\label{eq-7-4}
    \int_M\phi(x')x_1<y,e_{n+p}>e^{-\f 12|x|^2}d\mu=0.
\end{align}

Now we suppose $\phi_j(x')$ is a cutoff function on $\R^{n-k}$ which is equal to one on $B_j$, and zero outside $B_{j+1}$, where $B_j$ denotes the Euclidean ball in $\R^{n-k}$ with radius $j$.  Then we choose $f_j=\phi_j x_1$ which has compact support in $M$, and set the variation vector $V=f_je_{n+p}$, it follows from \eqref{eq-7-1} and \eqref{eq-7-4} that
\begin{align*}
    \mathcal{F}''=&(2\pi)^{-\f n2}\int_M\biggl[-f_jL_{\nu}f_j-2kh^2-4\sqrt{k}hf_j-|y^{\perp}|^2\biggr]e^{-\f 12|x|^2}d\mu.
\end{align*}
Let $j\ra\infty$, by using \eqref{eq-7-2} and \eqref{eq-7-3}, the dominated convergence theorem gives
\begin{align}
    &\lim_{j\ra\infty}\int_M-f_jL_{\nu}f_je^{-\f 12|x|^2}d\mu=-\int_M|x_1|^2e^{-\f 12|x|^2}d\mu\\
    &\lim_{j\ra\infty}\int_Mf_je^{-\f 12|x|^2}d\mu=0.
\end{align}
Therefore, for $j$ sufficiently large, we have
\begin{align}
    \mathcal{F}''\leq -\f 12(2\pi)^{-\f n2}\int_M|x_1|^2e^{-\f 12|x|^2}d\mu,
\end{align}
which is negative no matter what values of $y$ and $h$ which we choose.

\end{proof}

\begin{lem}\label{lem-7-2}
Let $x:M^n\ra\R^{n+p}$ be a complete noncompact self-shrinker with parallel principal normal. If $H$ vanishes somewhere but not identically, then $M$ is $\mathcal{F}$-unstable.
\end{lem}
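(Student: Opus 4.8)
The plan is to adapt the argument used in Theorem 6.2 (the closed case) to the complete noncompact setting, the key difference being that one must now work with compactly supported variations and control the relevant weighted integrals via the polynomial volume growth hypothesis. As in the closed case, the parallel principal normal condition lets us write $H=\langle H,e_{n+p}\rangle e_{n+p}$ with $\nabla^\perp e_{n+p}=0$, and for a variation vector field $V=fe_{n+p}$ the second variation formula collapses to the scalar form
\begin{align*}
    \mathcal{F}''=&(2\pi)^{-\f n2}\int_M\biggl[-fL_{\nu}f-2h^2|H|^2-4hf\langle H,e_{n+p}\rangle\\
    &\qquad -2f\langle y,e_{n+p}\rangle-|y^{\perp}|^2\biggr]e^{-\f 12|x|^2}d\mu,
\end{align*}
with $L_\nu f=\Delta f-\langle x,\nabla f\rangle+|Z|^2f+f$, $|Z|^2=\sum_{i,j}(h^{n+p}_{ij})^2$. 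By the computation in Theorem 6.2 (equations \eqref{para-1}, \eqref{para-2}), $\langle H,e_{n+p}\rangle$ and $\langle y,e_{n+p}\rangle$ satisfy $L_\nu\langle H,e_{n+p}\rangle=2\langle H,e_{n+p}\rangle$ and $L_\nu\langle y,e_{n+p}\rangle=\langle y,e_{n+p}\rangle$.

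First I would set up the spectral theory of $L_\nu$ on the weighted space $L^2(e^{-|x|^2/2}d\mu)$ in the noncompact case; since $M$ has polynomial volume growth and $|Z|^2$ appears, one uses the standard fact (as in Colding--Minicozzi) that $L_\nu$ acting on this weighted space has a drift-Laplacian structure $\mathcal{L}+|Z|^2+1$ with discrete spectrum bounded below, $\mu_1<\mu_2\le\cdots$, with a lowest eigenfunction that does not change sign. Then I would argue exactly as in the closed case: since $\langle H,e_{n+p}\rangle$ is an eigenfunction with eigenvalue $-2$ but vanishes somewhere (hence changes sign, because it is not identically zero), $-2$ cannot be the bottom eigenvalue $\mu_1$. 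Therefore there is an eigenfunction $\varphi$ with eigenvalue $\mu<-2$, and $\varphi$ is orthogonal (in the weighted inner product) to both $\langle H,e_{n+p}\rangle$ and every $\langle y,e_{n+p}\rangle$. Taking $V=\varphi e_{n+p}$ kills the cross terms $-4h\int f\langle H,e_{n+p}\rangle$ and $-2\int f\langle y,e_{n+p}\rangle$, leaving
\begin{align*}
    \mathcal{F}''=(2\pi)^{-\f n2}\int_M\bigl(\mu\varphi^2-2h^2|H|^2-|y^\perp|^2\bigr)e^{-\f12|x|^2}d\mu<0
\end{align*}
for every choice of $h\in\R$ and $y\in\R^{n+p}$, which is the desired $\mathcal{F}$-instability.

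The main obstacle, and the one point where the noncompact case genuinely differs from Theorem 6.2, is that an $\mathcal{F}$-stable critical point is only required to satisfy $\mathcal{F}''\ge0$ for \emph{compactly supported} $V$, whereas the eigenfunction $\varphi$ produced by the spectral argument need not have compact support. I would handle this exactly as in Lemma \ref{lem-7-1}: approximate $\varphi$ by $\varphi_j=\phi_j\varphi$ for a family of cutoffs $\phi_j$ exhausting $M$, and use the polynomial volume growth together with the exponential weight and dominated convergence to show $\int f_j L_\nu f_j\,e^{-|x|^2/2}\to\int\mu\varphi^2 e^{-|x|^2/2}$, $\int f_j\langle H,e_{n+p}\rangle e^{-|x|^2/2}\to0$, and $\int f_j\langle y,e_{n+p}\rangle e^{-|x|^2/2}\to0$; one also needs the relevant integrals ($\int\varphi^2e^{-|x|^2/2}$, $\int|\nabla\varphi|^2e^{-|x|^2/2}$) to be finite, which follows from $\varphi$ being a weighted-$L^2$ eigenfunction and the Rellich-type estimates available under polynomial volume growth (Corollary \ref{cor2-1}). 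Thus for $j$ large, $\mathcal{F}''(\varphi_j e_{n+p})<0$ for all $h,y$, contradicting $\mathcal{F}$-stability. A minor additional point is to confirm that a nontrivial weighted-$L^2$ eigenfunction with eigenvalue below $-2$ actually exists — this follows from the variational characterization of $\mu_1$ using $\langle H,e_{n+p}\rangle$ as a test function (which gives $\mu_1\le -2$ with equality only if $\langle H,e_{n+p}\rangle$ is a ground state, impossible since it changes sign), so $\mu_1<-2$ strictly and its eigenfunction $\varphi$ works.
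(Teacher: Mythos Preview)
Your overall strategy matches the paper's: reduce to the scalar operator $L_\nu$ via $V=fe_{n+p}$, use that $\langle H,e_{n+p}\rangle$ is a sign-changing $L_\nu$-eigenfunction with eigenvalue $-2$ to force the bottom of the spectrum strictly below $-2$, and then produce a compactly supported variation orthogonal (or nearly so) to the $-2$ and $-1$ eigenspaces. The difference lies in how the compactly supported destabilizing $f$ is obtained.

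The paper does \emph{not} take a global weighted-$L^2$ bottom eigenfunction and cut it off. Following Colding--Minicozzi, it instead uses the first \emph{Dirichlet} eigenfunction of $L_\nu$ on a large ball $B_R$: this function is compactly supported from the outset, its eigenvalue $\mu_1(B_R)$ decreases to $\mu_1<-2$ as $R\to\infty$, and one proves that for $R$ large it is almost orthogonal to $\langle H,e_{n+p}\rangle$ and to every $\langle y,e_{n+p}\rangle$, which suffices to make $\mathcal{F}''<0$ for all $h,y$.

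Your route has a real gap at the point you label ``minor'': you assert that $L_\nu$ has discrete spectrum on the weighted $L^2$ space and that the infimum $\mu_1$ is achieved by a weighted-$L^2$ eigenfunction $\varphi$. Neither claim is justified here, and Colding--Minicozzi do not make it --- indeed the potential $|Z|^2$ is not assumed bounded, so compactness of the resolvent is unavailable, and the positive solution $u$ of $L_\nu u=-\mu_1 u$ that one can construct (as the paper notes) need not lie in weighted $L^2$. Without $\varphi\in L^2_w$ your cutoff argument cannot start: you cannot conclude $\int f_j\langle H,e_{n+p}\rangle e^{-|x|^2/2}\to 0$ from orthogonality, nor control $\int f_jL_\nu f_j$. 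The Dirichlet-on-balls device is precisely what replaces this missing step. (As a smaller point, you also invoke polynomial volume growth, which is not among the hypotheses of Lemma~\ref{lem-7-2}; the paper's argument via Dirichlet eigenfunctions does not need it.)
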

\begin{proof}
We will follow the argument in Colding-Minicozzi II's paper \cite{CM2} closely to show that there exists a variation such that $\mathcal{F}''$ is negative for every choice of $h$ and $y$. Here we only give the outline, the reader can refer \cite{CM2} for the details of the argument.

For a complete noncompact self-shrinker $M^n$ in $\R^{n+p}$ with parallel principal normal, we have the second variation formula
 \begin{align*}
    \mathcal{F}''=&(2\pi)^{-\f n2}\int_M\biggl[-fL_{\nu}f-2h^2|H|^2-4hf<H,e_{n+p}>\\
    &\qquad -2f<y,e_{n+p}>-|y^{\perp}|^2\biggr]e^{-\f 12|x|^2}d\mu,
\end{align*}
with the operator $L_{\nu}$ defined by
\begin{align*}
    L_{\nu}f=\Delta f-<x,\nabla f>+|Z|^2f+f.
\end{align*}
Since $M$ is noncompact, there may not be the first eigenvalue for $L_{\nu}$. However, we can still define the bottom of the spectrum $\mu_1$ by
\begin{align}
\mu_1=\inf_f\f{\int_M\biggl(|\nabla f|^2-|Z|^2f^2-f^2\biggr)e^{-\f 12|x|^2}d\mu}{\int_Mf^2e^{-\f 12|x|^2}d\mu},
\end{align}
where the infimum is taken over all smooth functions with compact support. By using the standard density arguments and the dominated convergence theorem, we can show that we get the same $\mu_1$ by taking the infimum over all Lipschitz functions $f$ satisfying
\begin{align}\label{cond-7}
    \int_M\biggl(f^2+|\nabla f|^2+|Z|^2f^2\biggr)e^{-\f 12|x|^2}d\mu<\infty.
\end{align}

If $\mu_1\neq -\infty$, it can be proved that there is a positive function $u$ on $M$ with $L_{\nu}u=-\mu_1u$. And furthermore, if $v$ is in the weighted $W^{1,2}$ space (that is both $v$ and $\nabla v$ are in the weighted $L^2$ space) and $L_{\nu}v=-\mu_1v$, then $v=cu$ for some constant $c\in\R$.

Recall that $<H,e_{n+p}>$ satisfies
\begin{align*}
    L_{\nu}<H,e_{n+p}>=2<H,e_{n+p}>,
\end{align*}
and it can be easily checked that $<H,e_{n+p}>$ satisfies the condition \eqref{cond-7}, so we get $\mu_1\leq -2$. But $<H,e_{n+p}> $ vanishes somewhere by the assumption of our Lemma, therefore $\mu_1<-2$.

Then we can show that the lowest eigenfunction on a sufficiently large ball is almost orthogonal to $<H,e_{n+p}>$. By using this fact, we can construct a variation to get the instability. Precisely, there exists a $\bar{R}$ so that if $R>\bar{R}$ and $f$ is a Dirichlet eigenfunction for the first eigenvalue $\mu_1(B_R)$, then for any $h\in\R$ and any $y\in\R^{n+p}$ we have
\begin{align*}
    &\int_{M\cap B_R}\biggl[-fL_{\nu}f-2h^2|H|^2-4hf<H,e_{n+p}>\\
    &-2f<y,e_{n+p}>-|y^{\perp}|^2\biggr]e^{-\f 12|x|^2}d\mu<0.
\end{align*}
If we fix a $R\geq\bar{R}$, $f$ is a Dirichlet eigenfunction for the first eigenvalue $\mu_1(B_R)$ and set $V=fe_{n+p}$. Then for this variation vector $V$, $\mathcal{F}''$ is negative for every choice of $h$ and $y$ and this completes the proof.
\end{proof}

Now we are in the position to prove Theorem \ref{thm-1-3}.

\begin{proof}[Proof of Theorem \ref{thm-1-3}]
If the mean curvature vector $H$ does not vanish anywhere, then the assumption of our theorem and Theorem 1.1 in \cite{LW} (see also \cite[Theorem 1.3]{Smo}) give that $M^n=N^k\times\R^{n-k}$. It follows from lemma \ref{lem-7-1} that $M^n$ is $\mathcal{F}$-unstable.

If the mean curvature vector $H$ vanishes somewhere but not identically, then Lemma \ref{lem-7-2} also implies $M^n$ is $\mathcal{F}-$unstable.

So $H$ must vanish identically, and therefore $M^n$ is the plane $\R^n$.
\end{proof}

\section{Entropy stable self-shrinkers}

Finally, in the last section, we relate the entropy-stability to the $\mathcal{F}$-stability of self-shrinkers with higher codimension. We note that in the previous two sections, we have added the condition ``with parallel principal normal" on the self-shrinker. The condition is not preserved by the mean curvature flow and seems artificial, but it includes the important examples of minimal submanifolds in the sphere.

In \cite{CM2}, Colding and Minicozzi proved that entropy stable self-shrinkers that do not split off a line must be $\mathcal{F}$-stable. In higher codimension case, we also have the same result. This follows from the same argument of Colding-Minicozzi,  with some small changes of the notations in the first and second variation formulas of the $\mathcal{F}$-functionlal. We omit the details of the proof.

\begin{prop}\label{prop-8-1}
Suppose $x:M^n\ra\R^{n+p}$ is a smooth complete self-shrinker without boundary, with polynomial volume growth, and does not split off a line isometrically. If $M$ is $\mathcal{F}$-unstable, then there is a compactly supported variation $M_s$ of $M$ such that the entropy satisfies $\lambda(M_s)<\lambda(M)$ for $s\neq 0$.
\end{prop}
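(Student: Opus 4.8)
The plan is to follow the strategy of Colding--Minicozzi (\cite{CM2}), adapting each step to arbitrary codimension. The starting point is the hypothesis that $M$ is $\mathcal{F}$-unstable: there is a compactly supported normal variation field $V$ such that for \emph{every} choice of $h\in\R$ and $y\in\R^{n+p}$ one has $\mathcal{F}''_{0,\frac12}<0$ at $s=0$. By an averaging/projection argument one may further arrange that $V$ is $L^2_w$-orthogonal to the eigenspaces $\mathcal{W}_{-2}$ and $\mathcal{W}_{-1}$, i.e.\ $\langle V,H\rangle_w=0$ and $\langle V,z^\perp\rangle_w=0$ for all constant $z$; indeed one replaces $V$ by its projection off these finite-dimensional spaces, which only decreases $\mathcal{F}''$ by Proposition~\ref{prop-5-1} (the cross terms $-4h\langle H,V\rangle$ and $-2\langle y,V\rangle$ in \eqref{vv-3'} then vanish and the remaining $-2h^2|H|^2-|y^\perp|^2$ terms are maximized at $h=0$, $y=0$). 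After this normalization, instability is captured by the single inequality $\mathcal{I}(V,V)<0$ with $V$ having compact support.

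Next I would upgrade this \emph{infinitesimal} instability to a statement about the entropy $\lambda$. The key point is that, since $M$ does not split off a line, the entropy is attained at a unique pair $(x_0,t_0)$ up to the obvious normalizations — Colding--Minicozzi show $\lambda(M)=\mathcal{F}_{0,\frac12}(M)$ for a self-shrinker, and that any nearby submanifold $M'$ has $\lambda(M')=\sup_{x_0,t_0}\mathcal{F}_{x_0,t_0}(M')$ attained at some $(x_0,t_0)$ near $(0,\frac12)$, with $(x_0,t_0)$ depending smoothly on $M'$ by an implicit function theorem argument (this uses that the Hessian of $\mathcal{F}$ in the $(x_0,t_0)$ directions is nondegenerate, which is exactly where the ``no line'' hypothesis enters — if $M$ split off a line the maximizing $x_0$ would be non-unique). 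Consequently, along the variation $M_s$ with $M_0'=V$, the function $s\mapsto\lambda(M_s)$ equals $\mathcal{F}_{x_0(s),t_0(s)}(M_s)$ for smooth functions $x_0(s),t_0(s)$ with $x_0(0)=0$, $t_0(0)=\frac12$, and its second derivative at $s=0$ is obtained by differentiating twice: the chain rule produces exactly the expression $\mathcal{F}''$ from Theorem~4.1 with $h=\frac12 t_0''(0)$, $y=\frac12 x_0''(0)$ chosen to be the optimal (maximizing) values — but by construction of $V$ these optimal values contribute nonpositively, so $\frac{d^2}{ds^2}\big|_{s=0}\lambda(M_s)\le (2\pi)^{-n/2}\mathcal{I}(V,V)<0$. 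Since $s=0$ is a critical point of $s\mapsto\lambda(M_s)$ (as $M$ is a self-shrinker, hence a critical point of $\mathcal{F}_{0,\frac12}$, and the first-order change of the maximizing $(x_0,t_0)$ contributes nothing to first order), a negative second derivative forces $\lambda(M_s)<\lambda(M)$ for all small $s\ne0$.

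The main obstacle, and the step deserving the most care, is the smooth dependence of the maximizing pair $(x_0,t_0)$ on the variation parameter and the verification that the ``no line'' hypothesis guarantees a nondegenerate maximum — equivalently, that $\mathcal{F}_{x_0,t_0}(M)$ as a function of $(x_0,t_0)$ has a strict local maximum at $(0,\frac12)$ with negative-definite Hessian transverse to the scaling direction. In codimension one this is Lemma~7.9 and the surrounding material in \cite{CM2}; the argument there is essentially dimension/codimension independent, relying only on Huisken's monotonicity, the self-shrinker equation, and the weighted integral identities of Lemma~\ref{lem-v3-5}, all of which are available here verbatim. One also needs the elementary but slightly technical point that $\lambda$ is continuous under the variation and that the maximizing $(x_0,t_0)$ stays in a compact set (again from monotonicity and polynomial volume growth). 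Once these analytic facts are in place, the conclusion follows by assembling the pieces as above; I would therefore structure the proof as: (i) normalize $V$ against $\mathcal{W}_{-2}\oplus\mathcal{W}_{-1}$; (ii) invoke the strict-maximum / implicit-function lemma from \cite{CM2} to get smooth $x_0(s),t_0(s)$; (iii) compute $\frac{d^2}{ds^2}\big|_{s=0}\lambda(M_s)$ via Theorem~4.1 and conclude it is negative; (iv) integrate to get $\lambda(M_s)<\lambda(M)$.
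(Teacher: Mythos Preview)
The paper does not actually prove this proposition; it simply states that the Colding--Minicozzi argument carries over verbatim with only notational changes. Your outline is essentially that argument, so in spirit you are aligned with the paper.

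Two technical points in your outline need correction. First, your step (i) --- projecting $V$ off $\mathcal{W}_{-2}\oplus\mathcal{W}_{-1}$ --- is both unnecessary and problematic in the noncompact case: subtracting multiples of $H$ and $z^\perp$ from a compactly supported $V$ destroys compact support (neither $H$ nor $z^\perp$ is compactly supported on a noncompact self-shrinker), so the projected field cannot be used to define a compactly supported variation $M_s$. Fortunately no projection is needed: the $\mathcal{F}$-instability hypothesis already asserts $\mathcal{F}''(V,h,y)<0$ for \emph{every} $h$ and $y$, and that is exactly what step (iii) consumes. Second, in step (iii) the chain rule gives $h=t_0'(0)$ and $y=x_0'(0)$, not $\tfrac12 t_0''(0)$ and $\tfrac12 x_0''(0)$; the second-order derivatives $t_0''(0)$, $x_0''(0)$ multiply first partials of $\mathcal{F}$, which vanish at the critical point and hence drop out. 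With these two fixes, your steps (ii)--(iv) are the Colding--Minicozzi scheme, and the ``no line'' hypothesis enters exactly where you place it: it guarantees that $(0,\tfrac12)$ is a \emph{strict} local maximum of $(x_0,t_0)\mapsto\mathcal{F}_{x_0,t_0}(M)$ (cf.\ Lemma~7.10 of \cite{CM2}), which together with polynomial volume growth localizes the supremum for nearby $M_s$ and lets the second-variation inequality control the entropy.
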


Combining Theorem \ref{thm-1-2}, \ref{thm-1-3} and Proposition \ref{prop-8-1} gives the following classifications of entropy stable self-shrinkers.

\begin{corr}\label{thm-8-2}
Suppose that $M^n$ is an n-dimensional closed self-shrinker with parallel principal normal in $\R^{n+p}$, but not the n-sphere $\SS^n(\sqrt{n})$. Then $M$ can be perturbed to an arbitrarily close submanifold $\td{M}^n\subset\R^{n+p}$ such that $\lambda(\td{M})<\lambda(M)$.
\end{corr}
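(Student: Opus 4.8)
The plan is to deduce the corollary formally from Theorem \ref{thm-1-2} together with Proposition \ref{prop-8-1}; there is essentially no new content beyond combining these two results, so the work is in checking that all hypotheses apply.

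First I would verify that $M$ meets the hypotheses of Proposition \ref{prop-8-1}. Since $M$ is closed it is automatically a smooth complete self-shrinker without boundary, and it has polynomial volume growth trivially (its total volume is finite, so the required bound $\textrm{Vol}(B(r)\cap M)\leq Cr^d$ holds for any $d$). Moreover $M$ cannot split off a line isometrically: an isometric splitting $M\cong M'\times\R$ would force $M$ to be noncompact, contradicting that $M$ is closed. Hence the only remaining hypothesis of Proposition \ref{prop-8-1} to be checked is that $M$ is $\mathcal{F}$-unstable.

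To establish $\mathcal{F}$-instability I would invoke Theorem \ref{thm-1-2} in contrapositive form. That theorem asserts that $\SS^n(\sqrt{n})$ is the \emph{only} closed $\mathcal{F}$-stable self-shrinker with parallel principal normal. By hypothesis $M$ is a closed self-shrinker with parallel principal normal which is not $\SS^n(\sqrt{n})$, so $M$ is not $\mathcal{F}$-stable, i.e.\ it is $\mathcal{F}$-unstable. Applying Proposition \ref{prop-8-1} then yields a compactly supported variation $M_s$ of $M$ with $\lambda(M_s)<\lambda(M)$ for every $s\neq 0$; taking $\td{M}:=M_s$ for small nonzero $s$ gives a submanifold arbitrarily $C^\infty$-close to $M$ (an embedding when $M$ is embedded, an immersion in general) with strictly smaller entropy, which is precisely the claim.

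I do not expect a genuine obstacle at the level of this corollary: all the analytic subtlety — in particular the delicate point that a variation which lowers the single functional $\mathcal{F}_{x_0,t_0}$ also lowers the \emph{supremum} $\lambda$ over all $(x_0,t_0)$, requiring one to control how the maximizing pair $(x_0,t_0)$ moves under perturbation — is packaged inside Proposition \ref{prop-8-1}, which the paper imports wholesale from Colding--Minicozzi \cite{CM2}. The one thing to keep track of is that Theorem \ref{thm-1-2} is only available under the parallel-principal-normal hypothesis, which is assumed here, so the reduction to $\mathcal{F}$-instability is legitimate; the analogous statement for complete noncompact self-shrinkers with parallel principal normal and polynomial volume growth would be obtained by the identical scheme, substituting Theorem \ref{thm-1-3} for Theorem \ref{thm-1-2}.
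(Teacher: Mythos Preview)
Your proposal is correct and follows exactly the paper's own argument: apply Theorem \ref{thm-1-2} in contrapositive form to obtain $\mathcal{F}$-instability, observe that a closed manifold cannot split off a line, and then invoke Proposition \ref{prop-8-1}. The only difference is that you spell out the (trivial) verifications of polynomial volume growth and non-splitting that the paper leaves implicit.
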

\begin{proof}
Since $M^n$ is a closed self-shrinker with parallel principal normal in $\R^{n+p}$, but not the n-sphere $\SS^n(\sqrt{n})$. Theorem \ref{thm-1-2} implies $M$ is $\mathcal{F}$-unstable. On the other hand, $M$ clearly does not split off a line, so Proposition \ref{prop-8-1} gives that it is entropy unstable.
\end{proof}

\begin{corr}
Suppose $M^n$ is a complete noncompact self-shrinker in $\R^{n+p}$ with parallel principal normal, with polynomial volume growth and without boundary. If $|A|^2-|A^{\nu}|^2\leq c$ for some constant $c$ on $M$ and $M$ is not equal to $\SS^k(\sqrt{k})\times\R^{n-k}$, then $M$ can be perturbed to an arbitrarily close submanifold $\td{M}^n\subset\R^{n+p}$ such that $\lambda(\td{M})<\lambda(M)$.
\end{corr}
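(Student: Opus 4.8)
The plan is to derive this in exactly the same way Corollary~\ref{thm-8-2} was derived from Theorem~\ref{thm-1-2} and Proposition~\ref{prop-8-1}: show that $M$ (or an appropriate factor of it) is $\mathcal{F}$-unstable, and then feed this into Proposition~\ref{prop-8-1}. The one new ingredient is that Proposition~\ref{prop-8-1} requires $M$ not to split off a line isometrically, so the first step is a splitting reduction. Let $m\in\{0,1,\dots,n\}$ be the largest integer for which $M$ is isometric to a product $M=M'\times\R^m$ with $M'\subset\R^{n+p-m}$ an $(n-m)$-dimensional submanifold that does not split off a line. A short computation shows that $M'$ is itself a self-shrinker (with centre read off from that of $M$), that its mean curvature vector and second fundamental form agree with those of $M$, and hence that $M'$ inherits all the standing hypotheses: parallel principal normal, polynomial volume growth, no boundary, $|A|^2-|A^{\nu}|^2\le c$, and embeddedness. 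Moreover, since the Gaussian weight factors over a product and the extra $\R^m$-integral exactly cancels the normalization constant, $\lambda(\Sigma\times\R^m)=\lambda(\Sigma)$ for every submanifold $\Sigma\subset\R^{n+p-m}$. Thus it suffices to find, arbitrarily close to $M'$, some $\widetilde{M'}$ with $\lambda(\widetilde{M'})<\lambda(M')$, and then set $\widetilde{M}=\widetilde{M'}\times\R^m$.

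Next I would split into cases according to whether $M'$ is compact. If $M'$ is compact, it is a closed self-shrinker with parallel principal normal, and since $M\ne\SS^{k}(\sqrt{k})\times\R^{n-k}$ for all $k$ (reading $k=0$ as the plane $\R^n$), we have $M'\ne\SS^{\,n-m}(\sqrt{n-m})$; Corollary~\ref{thm-8-2} then produces the desired $\widetilde{M'}$. If $M'$ is noncompact, then $M'\ne\R^{n-m}$ — a Euclidean factor would itself split off a line, contradicting the maximality of $m$ — so $M'$ is not a plane, and Theorem~\ref{thm-1-3} forces $M'$ to be $\mathcal{F}$-unstable. Since $M'$ does not split off a line, Proposition~\ref{prop-8-1} then yields a compactly supported variation $M'_s$ of $M'$ with $\lambda(M'_s)<\lambda(M')$ for all small $s\ne0$; put $\widetilde{M'}=M'_s$. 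In either case $\widetilde{M}=\widetilde{M'}\times\R^m$ is arbitrarily close to $M$ and $\lambda(\widetilde{M})=\lambda(\widetilde{M'})<\lambda(M')=\lambda(M)$, as required.

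I expect that no step here is genuinely hard; the only thing that needs care is the bookkeeping of the splitting reduction. Specifically, one must check that maximal splitting really does leave a factor $M'$ to which Theorem~\ref{thm-1-3} or Corollary~\ref{thm-8-2} applies; that the products surviving the dichotomy are exactly the $\SS^{k}(\sqrt{k})\times\R^{n-k}$ (with $\R^n$ included as the degenerate $k=0$ case), so that the hypothesis of the corollary is precisely the right exclusion; and that a perturbation of the $M'$-factor which is only compactly supported in $M'$ still counts as an admissible perturbation of $M=M'\times\R^m$ — which is fine, since $\lambda$ ignores the flat $\R^m$ factor and hence the strict drop in entropy is preserved. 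Granting those routine points, the corollary follows by direct appeal to results already established.
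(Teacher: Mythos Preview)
Your proposal is correct and follows essentially the same route as the paper's own proof: split off the maximal Euclidean factor, observe that the $\mathcal{F}$-functional (and hence entropy) is unchanged under adding or removing flat $\R^m$ factors, and then apply Corollary~\ref{thm-8-2} or Theorem~\ref{thm-1-3} together with Proposition~\ref{prop-8-1} to the non-splitting factor. The paper's write-up is terser---it does not separate the compact and noncompact cases for the factor explicitly---but the ingredients and logic are the same.
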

\begin{proof}
To prove it, suppose that $M^n=N^k\times\R^{n-k}\subset \R^{n+p}$, where $N^k$ is a self-shrinker in $\R^{k+p}$ with parallel principal normal and does not split off another line isometrically. By the assumption, $N^k$ is not the sphere $\SS^k(\sqrt{k})$. From Theorem \ref{thm-1-3}, Proposition \ref{prop-8-1} and Corollary \ref{thm-8-2} we conclude that $N^k$ can be perturbed to an arbitrarily close $\td{N}^k$  such that $\lambda_{\R^{k+p}}(\td{N})<\lambda_{\R^{k+p}}(N)$. Note that by a direct calculation, for $M^n=N^k\times\R^{n-k}$ where $N^k\subset \R^{k+p}$ we have the following fact (see \cite{CM2}):
\begin{align*}
    \mathcal{F}_{x,t_0}^{\R^{n+p}}(M)=\mathcal{F}_{x',t_0}^{\R^{k+p}}(N),
\end{align*}
where $x'$ is the projection of $x$ to $\R^{k+p}$. Then the Corollary follows easily from the definition of entropy.
\end{proof}

\bibliographystyle{Plain}

\end{document}